\def\BibTeX{{\rm B\kern-.05em{\sc i\kern-.025em b}\kern-.08em
    T\kern-.1667em\lower.7ex\hbox{E}\kern-.125emX}}
\theoremstyle{definition}
\newtheorem{theorem}{Theorem}  
\newtheorem{definition}{Definition}
\newtheorem{proposition}[theorem]{Proposition}
\newtheorem{problem}{Problem}
\newtheorem{remark}{Remark}
\newtheorem{assumption}{Assumption}
\newtheorem{example}{Example}
\newcommand{\mc}{\mathcal}
\newcommand{\im}{\operatorname{Im}}
\newcommand{\real}{\mathbb{R}}
\newcommand{\naturalpos}{\mathbb{N}_{>0}}
\newcommand{\naturalnneg}{\mathbb{N}_{\geq 0}}
\newcommand{\realpos}{\mathbb{R}_{> 0}}
\newcommand{\realnneg}{\mathbb{R}_{\geq 0}}
\newcommand{\complex}{\mathbb{C}}
\newcommand{\tsp}{\mathsf{T}}
\newcommand{\pinv}{\dagger} 
\newcommand{\inv}{{\negat 1}} 
\newcommand{\negat}{\scalebox{0.75}[.9]{\( - \)}}
\newcommand*{\QEDB}{\hfill\ensuremath{\square}}
\newcommand*{\QEDBL}{\hfill\ensuremath{\blacksquare}}
\newcommand{\map}[3]{#1: #2 \rightarrow #3}
\newcommand{\norm}[1]{\Vert #1 \Vert}
\DeclareMathAlphabet{\mymathbb}{U}{BOONDOX-ds}{m}{n}
\def\symmetric{\mathbb{S}}
\newcommand{\bmat}[1]{\begin{bmatrix}#1\end{bmatrix}}
\newcommand{\blue}[1]{#1}
\begin{document}

\title{The Internal Model Principle of \\ Time-Varying Optimization}

\author{Gianluca Bianchin, \IEEEmembership{Member, IEEE}, and Bryan Van Scoy, \IEEEmembership{Member, IEEE}
\thanks{This material is based upon work supported in part by the National
Science Foundation under Award No. 2347121 and the FRFS
WEL-T Investigator Programme. Any opinions, findings and conclusions or
recommendations expressed in this material are those of the authors and do
not necessarily reflect the views of the National Science Foundation.}%
\thanks{%
G.~Bianchin is with the ICTEAM institute and the Department of Mathematical Engineering (INMA) at the University of Louvain, Belgium.
Email: \texttt{gianluca.bianchin@uclouvain.be}}%
\thanks{
B.~Van~Scoy is with the Department of Electrical and Computer Engineering, Miami University, Oxford, OH~45056, USA. Email: \texttt{bvanscoy@miamioh.edu}%
}}

\maketitle

\begin{abstract}
Time-varying optimization problems are central to many engineering 
applications, where performance metrics and system constraints evolve 
dynamically with time. 
Several algorithms have been proposed to address these problems; a common 
characteristic among them is their implicit reliance on knowledge of the 
optimizers’ temporal variability.
In this paper, we provide a fundamental characterization of this 
property: we show that an algorithm can track time-varying optimizers if and 
only if it incorporates a model of the temporal variability of the optimization 
problem. We 
refer to this concept as the \textit{internal model principle of time-varying 
optimization.} 
Our analysis relies on showing that designing optimization algorithms for time-varying problems can be reformulated as an output regulation problem. By leveraging tools from center manifold theory, we derive 
necessary and sufficient conditions for exact asymptotic tracking. These results, in turn, 
pave the way for the development of new optimization algorithms.
We demonstrate the effectiveness of the approach through numerical experiments 
on both synthetic problems and the dynamic traffic assignment problem 
from traffic control.
\end{abstract}

\begin{IEEEkeywords}
Time-varying optimization, Internal model principle, Gradient regulation,
traffic control.
\end{IEEEkeywords}

%
\section{Introduction}
\IEEEPARstart{T}{ime-varying} optimization problems play a central role in several scientific 
domains, as they underpin many important contemporary engineering problems. 
Examples include training in Machine Learning~\cite{YC-YZ:20,AR-KS:13}, 
dynamic signal estimation in Signal Processing~\cite{FJ-AR:12}, trajectory 
tracking in Robotics~\cite{AD-VC-AG-GR-FB:23}, system optimization in 
Industrial Control~\cite{CB-BS-BD:09}, and much more. 
Historically, discrete-time algorithms for time-varying optimization
have been proposed and studied first since they emerged as a natural extension 
of their time-invariant counterparts, allowing for cost functions and 
constraints that may change over time~\cite{VZ-MA:10}.
These approaches build on the classical perspective on 
optimization, which seeks to construct methods to determine optimizers and 
consist of iterative procedures implemented on digital devices.
In recent years, however, numerous optimization algorithms have been 
formulated and studied as continuous-time 
processes~\cite{WS-SB-EC:16,AC-EM-JC:16,MM-MJ:19}, mainly due to the wide 
availability of tools for Ordinary Differential Equations (ODEs) that can 
facilitate their analysis~\cite{MM-MJ:19}.

\begin{figure}[tb]
\centering
\includegraphics[scale=0.85]{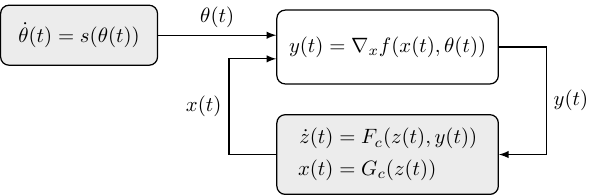}
\caption[]{Architecture of the gradient-feedback design scheme studied in this 
work. An optimization algorithm is to be designed (bottom block), having access to gradient evaluations of the loss to be minimized (top right 
block), and generating a sequence of exploration points $x(t)$ at which the 
gradient shall be evaluated.
The loss function varies with time, where the temporal 
variability $\theta(t)$ is assumed to be unmeasurable and generated by an 
exosystem (top left block). Shaded blocks emphasize the presence of dynamics.}
\label{fig:feedback_design}
\vspace{-.5cm}
\end{figure}

Motivated by these recent developments, in this paper, we study time-varying 
convex optimization problems and focus on the use of continuous-time dynamics 
to track exactly optimal solutions.
Several approaches have already been developed for this 
purpose~\cite{YZ-MS:98,MF-SP-VP-AR:17,bastianello2024,AS-ED-SP-GL-GG:20}, yet all 
these techniques implicitly require full knowledge of the temporal variability 
of the problem~\cite{AS-ED-SP-GL-GG:20}. Unfortunately, in most 
practical 
applications, having such knowledge is impractical, either because the temporal 
variability enters the optimization in the form of exogenous disturbances that 
are unknown and cannot be measured (see, e.g., \cite{GB-JC-JP-ED:21-tcns,AS-ED-JM-AB:21}), or simply because it is unrealistic to ask for a noiseless model of the problem's variability. 
Departing from this, in this paper we pose the following question: {\it is 
it possible to track (exactly and asymptotically) a minimizer of a 
time-varying  optimization problem without any knowledge of the temporal variability of the  optimization?}
Interestingly, we prove a fundamental result showing that tracking can be 
achieved if and only if the temporal variability of the problem 
can be `observed' by the algorithm, and the latter incorporates a suitably 
reduplicated model of such a variability.
We refer to this conclusion as \textit{the internal model principle of time-varying 
optimization,} akin to its control-theoretic counterpart~\cite{BF-WW:76,JH-WW:84}.
Our approach relies on reinterpreting the optimization algorithm design as a 
nonlinear, multivariable regulation problem~\cite{BF:77}, and our analysis 
uses tools from center manifold theory~\cite{JC:81,JH-WW:84}.~Fig.~\ref{fig:feedback_design} illustrates the architecture of the 
gradient-feedback design scheme studied in this paper.

\textit{Related works.}
The literature on methods for time-varying optimization is mainly divided into 
two classes of solutions. 
The first class consists of methods that do not utilize any 
model of the temporal variability of the 
problem~\cite{EH-RW:15,SF:20,SS:12,RD-AB-RT-KR:19}; 
instead, they seek to solve a sequence of static problems. 
Several established approaches belong to this class, including the online 
gradient descent method~\cite{MZ:03} and the online Newton step 
algorithm~\cite{EH-AA-SK:07}---see~\cite{EH:16} and references therein.
Because the temporal variability of the problem is unknown (or ignored), 
algorithms in this class can make a decision only \textit{after} each 
variation has been observed, thus incurring a certain {\it regret}.
Mathematically, these techniques are capable of reaching only a 
neighborhood of an optimizer~\cite{AS-ED-SP-GL-GG:20}, and exact tracking 
is out of reach in general for these approaches. \blue{This may not be surprising, as it has been recognized recently that these algorithms can be interpreted as basic integral controllers~\cite{IN-MB-LM-GN:23}.}
Moreover, suitable 
assumptions need to be made to provide convergence guarantees for these 
methods; we refer to~\cite{AS-ED-SP-GL-GG:20,EH:16} and references therein 
for a detailed discussion.
In contrast, the second class of methods uses a model of the temporal evolution 
of the problem to exactly track the optimal 
trajectories~\cite{YL-GQ-NL:18,AR-KS:13,AJ-AR-SS-KS:15,OD-NH-PJ:17}. 
Particularly celebrated is the prediction-correction algorithm 
(see~\cite{YZ-MS:98}, the recent work~\cite{MF-SP-VP-AR:17}, and the discrete implementations~\cite{AS-AM-AK-GL-AR:16}), 
whereby, at each time, a prediction step is used to anticipate how the optimizer 
will evolve over time, and a correction step is used to seek a solution to 
each instantaneous optimization problem. 
We refer to~\cite{AS-ED-SP-GL-GG:20} for a recent overview of the topic. 
Recent years have witnessed a growing interest in this problem: 
\cite{AD-VC-AG-GR-FB:23} uses contraction to study these methods; 
\cite{MM-JB-JS-PT:24} uses sampling to estimate the temporal variability of 
the problem; a recent survey has appeared in~\cite{AH-ZH-SB-GH-FD:24}; 
constrained optimization problems are studied in~\cite{AS-ED:17}; recent 
works~\cite{RR-AM-UV:22,YS-CC-XZ-WC:24,RR-AM-UV:23,KG-DP:20} have modified 
these methods to achieve fixed-time convergence.
With respect to this body of literature, 
which assumes precise knowledge of the temporal variability of the problem
(see detailed discussion in Section~\ref{sec:prob_formulation_a}), our focus 
here is to answer the following fundamental question: what is the 
least-restrictive information that is needed to track, asymptotically with zero 
error, the optimizer of a time-varying problem? To the best of the authors' 
knowledge, a rigorous answer to this question is missing in the literature.

In this work, we recast the problem of designing optimization algorithms for 
time-varying optimization as a nonlinear output regulation problem. This 
connects our work with the literature on output regulation, which 
is a well-established area of research. Initial works can be traced 
back to the 1970s~\cite{BF-WW:76,ED:76,BF:77} focusing on linear systems, and 
later extended to nonlinear systems using both 
local~\cite{AI-CIB:90,CB-FP-AI-WK:97} and global~\cite{CB-AI-LP:03} approaches 
for the analysis. In recent years, the field has received new attention using 
modern methods; see, e.g., \cite{LW-AI-HS-LM:16,AI-LM-LP:12,JH:04} and the 
recent tutorial~\cite{JH-AI-LM-MM-ES-WW:18}. To the best of our knowledge, this 
is the first work in the literature that establishes a connection between the problem 
of time-varying optimization and  that of output regulation.

\textit{Contributions.} This paper features four main contributions. First, we 
recast the problem of designing a time-varying optimization algorithm as a 
nonlinear multivariable regulation problem, whereby the signal to be regulated 
is the gradient of the loss function. We leverage this formulation to derive 
necessary and sufficient conditions to achieve exact asymptotic tracking for 
a large class of optimization methods (described by sufficiently-smooth 
functions).
In a net departure from existing approaches (e.g., 
\cite{MZ:03,AS-ED-SP-GL-GG:20,MF-SP-VP-AR:17,bastianello2024}), our 
characterization is general and allows us to study not only a single 
optimization method, but an entire class, which enables us to derive 
fundamental results for all algorithms in this class. 
Second, by harnessing tools from center manifold theory~\cite{JC:81,JH-WW:84},
we provide necessary and sufficient conditions for an optimization algorithm 
to ensure tracking. Interestingly, these conditions depend 
on the properties of the loss function (through a gradient invertibility-type 
condition) and on the inner model describing the temporal variability of the 
problem. This property allows us to prove the \textit{internal model 
principle of time-varying optimization,} which states that for an optimization 
algorithm to achieve asymptotic tracking, it must incorporate a reduplicated 
model of the temporal variability of the problem. This feature is implicit in 
all existing approaches for time-varying 
optimization~\cite{YZ-MS:98,MF-SP-VP-AR:17,RR-AM-UV:22,RR-AM-UV:23,KG-DP:20} 
but, to the best of the authors' knowledge, lacked a rigorous understanding 
until now.  
Third, we derive necessary and sufficient 
conditions for the existence of a tracking algorithm. Fourth, we use our 
characterizations to design algorithms for time-varying optimization. 
With respect to the existing literature, our algorithm does not require one to 
know or measure exactly the temporal variability of the problem and thus uses 
less stringent assumptions. Finally, we illustrate the applicability of the 
approach numerically on both synthetic problems and the traffic assignment problem in transportation.

\textit{Organization.} 
Section~\ref{sec:prob_formulation} presents the problem of interest, 
Section~\ref{sec:parameter_feedback} the parameter-feedback problem (where 
the temporal variability of the problem can be explicitly measured), and 
Section~\ref{sec:gradient_feedback} the gradient-feedback problem (where the 
algorithm has access only to first-order function evaluations). 
Section~\ref{sec:fidelity_model} discusses the tracking accuracy in 
relationship to the fidelity of the internal model, 
Section~\ref{sec:extensions} presents extensions to constrained 
optimization and discrete-time algorithms,
Section~\ref{sec:simulations} validates numerically the results,  and 
Section~\ref{sec:conclusions} illustrates our conclusions. Finally, in the 
Appendix, we summarize basic facts on center manifold theory used in the paper.

\textit{Notation.}
We denote by $\symmetric^n$ the space of $n\times n$ symmetric real matrices. 
Given an open set $U,$ we say that $\map{f}{U}{\real}$ is of 
differentiability class $C^k$ if it has a $k\textsuperscript{th}$ derivative 
that is continuous in $U.$ 
%
The gradient of $f(x, \theta): \real^n \times \Theta \rightarrow \real, \Theta \subseteq \real^p,$ with respect to ${x} \in \real^n$ is denoted by 
$\nabla_{{x}} f({x}, \theta): \real^n \times \Theta \rightarrow \real^n$. The 
partial derivatives of 
$\nabla_{{x}} f({x}, \theta )$ with respect to ${x}$ and $\theta$ are denoted by $\nabla_{{x x}} f({x}, \theta): \real^n \times \Theta \rightarrow \mathbb{S}^n$ and $\nabla_{{x} \theta} f({x}, \theta): \real^n \times \Theta \rightarrow \real^{n\times p}$, respectively.

\section{Problem setting}
\label{sec:prob_formulation}

\subsection{Optimization objectives}
\label{sec:prob_formulation_a}
We consider the time-varying optimization problem:
\begin{align}\label{eq:optimization_objective_f}
    \min _{x \in \real^n} f(x,\theta(t)),
\end{align}
where $t \in \realnneg$ denotes time and 
$f: \real^n \times \Theta \rightarrow \real,$ 
$\Theta \subseteq \real^p,$ is a loss function that is parametrized by the 
time-varying parameter vector $\map{\theta}{\realnneg}{\Theta}.$
We make the following standard assumptions on the loss. 

\begin{assumption}[\textbf{\textit{Properties of the objective function}}]
\label{as:convexity_lipschitz_f0}
The map $x \mapsto f(x,\theta)$ is convex and $x\mapsto \nabla_x f(x,\theta)$ is Lipschitz 
continuous in $\real^n$, for each $\theta \in \Theta.$
\QEDB\end{assumption}

Convexity and smoothness are standard assumptions in 
optimization~\cite{EH:16}, which have been widely used in works on related problems~\cite{AC-EM-JC:16,AJ-AR-SS-KS:15,GB-JC-JP-ED:21-tcns,AS-ED-JM-AB:21,YZ-MS:98,MF-SP-VP-AR:17,bastianello2024,AS-ED-SP-GL-GG:20,MZ:03,AH-ZH-SB-GH-FD:24,AS-ED:17,OD-NH-PJ:17,MM-JB-JS-PT:24}.

In~\eqref{eq:optimization_objective_f}, the parameter $\theta(t)$ is used to 
model the temporal variability of the problem.
We will require that $\theta(t)$ belongs to a certain class of temporal 
variabilities, as specified next. 

\begin{assumption}[\textbf{\textit{Class of temporal variabilities}}]
\label{as:exosystem}
There exists a smooth (i.e., $C^\infty$) vector field 
$\map{s}{\Theta}{\real^p}$ and $\theta(0) \in \Theta$ such that the parameter vector 
$\theta(t)$ satisfies
\begin{align}\label{eq:exosystem}
\dot \theta (t) = s(\theta(t)),
\end{align}
for $t\in \realnneg.$
Moreover, $\theta=0$ is an equilibrium of~\eqref{eq:exosystem} and the  
trajectories of~\eqref{eq:exosystem} are bounded.~
\QEDB\end{assumption}

We stress that, a priori, we do not assume that $s(\theta)$ nor $\theta(0)$
are known (see Problem~\ref{prob:assumptions} shortly 
below for additional details). 
Assumption~\ref{as:exosystem} characterizes the class of temporal variabilities 
taken into consideration. This assumption is mild, 
as it only requires that $\theta(t)$ is deterministic, sufficiently smooth (so 
that its derivative is some $C^\infty$ function of $\theta(t)$ as 
in~\eqref{eq:exosystem}), and its trajectories remain bounded. In line with~\cite{AI-CIB:90,ED:76}, we call the autonomous 
system~\eqref{eq:exosystem} the {\it exosystem}.

For simplicity of the presentation, we assume that $\Theta$ is some neighborhood of the origin of $\real^p$, \blue{corresponding to the requirement that $\theta=0$ is an equilibrium of~\eqref{eq:exosystem}.}
We put no restrictions on 
the size of this neighborhood (which is, e.g., allowed to be the entire space 
$\Theta = \real^p$), and thus on the size of $\theta(t)$ nor on its temporal 
variation. Moreover, there is no 
restriction with asking that \blue{$\theta=0$ is an equilibrium because, if the equilibrium is located at any other point, the latter can be shifted to the origin through a time-invariant change of variables that does not alter the critical points of~\eqref{eq:optimization_objective_f} or the validity of Assumption~\ref{as:convexity_lipschitz_f0}.}

\begin{remark}[\textbf{\textit{Parametrization in time-varying optimization}}]
\label{rem:f0_x_t}
In~\eqref{eq:optimization_objective_f}, the time variation is captured 
implicitly by the parameter $\theta(t)$. A related, yet slightly 
more general, problem is:
\begin{align}\label{eq:optimization_objective_f0}
 \min_{x \in \real^n} f_0(x,t),
\end{align}
where the dependency on time is explicit. 
Problem~\eqref{eq:optimization_objective_f} can be recast uniquely
as in~\eqref{eq:optimization_objective_f0} by letting 
$f_0(x,t) = f(x,\theta(t))$ for all $x$ and $t$. On the other hand, in general, 
there exists an infinite number of ways to 
parametrize~\eqref{eq:optimization_objective_f0} as 
in~\eqref{eq:optimization_objective_f}, thus leading to possible 
ambiguities. For instance, any $f_0(x,t)$ may be parametrized by 
$\theta(t) = t$ (so that $f_0\equiv f$), although this is not
compatible with our boundedness trajectory assumption  
(Assumption~\ref{as:exosystem}).
\QEDB\end{remark}

\begin{remark}[\textbf{\textit{Discrete-time implementations}}]
In this work, we pursue the continuous-time
time-varying optimization problem~\eqref{eq:optimization_objective_f}, together with a 
continuous-time optimization algorithm. This is inspired by control applications 
(see~\cite{GB-JC-JP-ED:21-tcns} and references therein), 
where~\eqref{eq:optimization_objective_f} models 
performance objectives associated with a physical plant to be controlled, and 
$\theta(t)$ models exogenous disturbances affecting the plant. If instead the parameter vector were to evolve in discrete time, we would have the alternative optimization problem
\begin{align}\label{eq:discrete_optimization}
\min _{x \in \mathbb{R}^n} f\left(x, \theta_k\right),
\end{align}
where $k \in \mathbb{N}_{\geq 0}$ denotes time or iteration, and 
$f: \mathbb{R}^n \times \Theta \rightarrow \mathbb{R}$. While we could discretize $\theta(t)$ and then study discrete-time algorithms to solve this problem, physical control applications motivate studying the continuous-time problem directly.
We provide additional details on discrete-time implementations of our method in Section~\ref{sec:discrete-time}.
\QEDB\end{remark}

In what follows, we say that $x^\star(t),$ with 
$\map{x^\star}{\realnneg}{\real^n}$, is a \textit{critical trajectory} 
of~\eqref{eq:optimization_objective_f} if it satisfies:
\begin{align}\label{eq:critical_trajectory}
0=\nabla_x f(x^\star(t),\theta(t)), \quad \forall t \in \realnneg.
\end{align}
We assume the existence of a critical trajectory and that any critical 
trajectory is continuous. 
The existence of a critical trajectory can be guaranteed under standard 
assumptions on the optimization problem; for example, 
coercivity of the cost~\cite{RS:96}
(i.e., $f(x,\theta)\to \infty$ when $\Vert{x}\Vert \to \infty$), or (by 
Weierstrass' theorem~\cite{RS:96}) when the search domain can be restricted to 
a compact set without altering the optimizers.
Continuity of the critical trajectories can also be ensured under standard 
assumptions: for example, by Berge's theorem~\cite{RS:96}, by requiring that 
$f(x,\theta)$ is continuous in $\theta$.

In our analysis, we will use a {\it critical point with 
$\theta$ at rest}, which is a constant vector $x^\star_\circ \in \real^n$, 
defined implicitly as:
\begin{align}\label{eq:gradient_circ}
0=\nabla_x f(x^\star_\circ,0).
\end{align}
We assume that $x^\star_\circ$ exists and is locally unique\footnote{Existence 
and local uniqueness of $x^\star_\circ $ can be guaranteed, for 
instance, under the assumptions of the Implicit Function Theorem~\cite{WR:76}. 
Namely, let $X_{\circ} \times \Theta_{\circ}$ be some neighborhood of 
$\left(x_{\mathrm{o}}^{\star}, 0\right)$, $x^\star_\circ $  exists and is locally unique when:
(i) $f$ is $C^1$ on $X_{\circ} \times \Theta_{\circ}$,
(ii) $x \mapsto f(x, \theta)$ is $C^2$ on $X_{\circ}$ for each $\theta \in \Theta_{\circ}$, and
(iii) the $\left.\nabla_{x x}^2 f(x, \theta)\right|_{x=x_{\circ}^{\star}, \theta=0}$ is locally positive definite.}.

\begin{remark}[\textbf{\textit{Critical trajectory vs. critical point at rest}}]
\label{rem_ucirc_ustar}
We stress that $x^\star(t)$ and $x^\star_\circ$ are distinct quantities. 
Unlike the critical trajectory (defined in~\eqref{eq:critical_trajectory}), 
which denotes the optimal input to be tracked by the optimization algorithm, 
the critical point with $\theta$ at rest (defined 
in~\eqref{eq:gradient_circ}) is a constant vector that need not be tracked and 
may never coincide with $x^\star(t)$.~
\QEDB\end{remark}

We conclude this discussion by reviewing two important classes of optimization 
methods commonly used to solve~\eqref{eq:optimization_objective_f}.

\begin{remark}[\textbf{\textit{Basic gradient flow algorithms}}]
\label{rem:static_gradient}
The basic gradient flow algorithm~\cite{PAA-KK:06,EH:16} 
for~\eqref{eq:optimization_objective_f} reads as:
\begin{align}\label{eq:gradient_flow}
    \dot x(t) = -\eta \nabla_x f(x(t), \theta(t)),
\end{align}
where $\eta>0$ models the algorithm's step size. It is 
known~\cite{PAA-KK:06} that this class of algorithms is capable of 
computing (asymptotically) critical points of~\eqref{eq:optimization_objective_f} \blue{only when the latter are constant;} in all other cases, the algorithm converges to a neighborhood of a critical point and convergence is inexact.
We refer the reader to Remark~\ref{rem:internal_model_grad_flow} for an 
insightful interpretation of this fundamental limitation. 
\QEDB\end{remark}

\begin{remark}[\textbf{\textit{Prediction-correction algorithms}}]
\label{rem:ppred_correct}
An established family of methods to solve~\eqref{eq:optimization_objective_f} 
is that of prediction-correction 
algorithms~\cite{YZ-MS:98,MF-SP-VP-AR:17,RR-AM-UV:22,RR-AM-UV:23,KG-DP:20}. 
Tailored to our setting, the basic method of this class reads as:
\begin{multline}\label{eq:pred_correction}
    \dot{x}(t) = -\nabla_{xx}^\inv f(x(t), \theta(t)) [\eta \nabla_x f(x(t), \theta(t))\\
    +\nabla_{x \theta} f(x(t), \theta(t)) \cdot s(\theta(t))],
\end{multline}
where $\nabla_{x x} f(x , \theta)$ is the Hessian of $f(x, \theta)$, and
$\nabla_{x \theta} f(x, \theta)$ is the partial derivative of 
$\nabla_x f(x, \theta)$ with respect to $\theta$.
Under strong convexity assumptions, this algorithm is known to converge 
exponentially, with zero asymptotic error, to a critical trajectory~\cite{YZ-MS:98,MF-SP-VP-AR:17}.
Variations of this algorithm have also been proposed to achieve fixed-time or 
finite-time convergence~\cite{RR-AM-UV:22}.
We conclude by noting that implementing~\eqref{eq:pred_correction} requires (i) 
the cost to be twice continuously differentiable and strongly convex, (ii) knowledge of the maps $s(\theta)$, $\nabla_x f(x,\theta)$, $\nabla_{xx} f(x,\theta)$, and $\nabla_{x\theta} f(x,\theta)$, and (iii) knowledge of $\theta(t).$
\QEDB\end{remark}

\subsection{Problem statement}

In line with the literature~\cite{EH:16,YZ-MS:98,MF-SP-VP-AR:17,RR-AM-UV:22,bastianello2024}, we 
focus on gradient-type algorithms for 
solving~\eqref{eq:optimization_objective_f}, which are algorithms that have 
access to first-order oracles of the cost; that is, function evaluations of the 
map:
\begin{align}\label{eq:gradient}
  (t, x) \mapsto \nabla_x f(x,\theta(t)).
\end{align}

\begin{remark}[\textbf{\textit{Evaluating the gradient}}]
\label{rem:evaluating_gradient}
In the optimization literature~\cite{EH:16}, function evaluations 
of~\eqref{eq:gradient} are generally obtained through two main approaches:
(i) the algorithm has access to both the analytical form of 
$\nabla_x f(x,\theta)$ and $\theta(t)$ at each time $t$ (either because this 
signal is known or measurable), or (ii) an oracle returns evaluations 
of~\eqref{eq:gradient} (computed, e.g., through numerical differentiation). 
These alternatives will be discussed more in detail in (O1)--(O4), shortly below.
\QEDB\end{remark}

We let the optimization algorithm be described by a dynamic internal state 
$z(t),$ taking values on an open subset 
$\mc Z  \subseteq \real^{n_c}, n_c \in \naturalpos$. 
The optimization algorithm generates a sequence of points $x(t) \in \real^{n}$ 
(called \textit{exploration signal}), and has access to function 
evaluations of~\eqref{eq:gradient} at these points (called \textit{gradient 
feedback signal}):
\begin{subequations}\label{eq:controller_equatons_zx_f0}
\begin{equation}\label{eq:controller_equatons_zx_f0_b}
    y(t) = \nabla_x f(x(t),\theta(t)).
\end{equation}
Together with this gradient feedback signal, the optimization algorithm is described by\footnote{While one could 
allow $G_c$ to also depend on $y$, so that $x(t) = G_c(z(t),y(t))$, we will 
show in Section~\ref{sec:gradient_feedback} that this is unnecessary.}
\begin{align}\label{eq:controller_equatons_zx_f0_a}
\dot z(t) &= F_c(z(t), y(t)), & 
x(t) &= G_c(z(t)), 
\end{align}
\end{subequations}
where $\map{F_c}{\mc Z \times \real^n}{\real^{n_c}}$ and 
$\map{G_c}{\mc Z}{\real^{n}}$ are functions to be designed.
In the remainder, we refer to~\eqref{eq:controller_equatons_zx_f0} as a 
\textit{dynamic gradient-feedback optimization algorithm}.
The architecture of the presented gradient-feedback algorithm 
is illustrated in Fig.~\ref{fig:feedback_design}.
For simplicity of the presentation, we will require that $F_c(z,y)$ and 
$G_c(z)$ are such that\footnote{Notice that this is without loss 
of generality since  $F_c(z,y)$ and $G_c(z)$ are to be designed 
and $x_\circ^\star$ is known through~\eqref{eq:gradient_circ}.}:
\begin{align}\label{eq:conditions_Fc_Gc}
0 &= F_c(z_\circ^\star, 0) \qquad\text{and}\qquad
x_\circ^\star = G_c(z_\circ^\star),
\end{align}
for some locally unique $z_\circ^\star \in \real^{n_c}$.
Together with~\eqref{eq:gradient_circ}, this ensures that $z_\circ^\star$ is an equilibrium 
of~\eqref{eq:controller_equatons_zx_f0} when the gradient 
feedback signal $y(t)$ is equal to zero.

The dynamics of 
algorithm~\eqref{eq:controller_equatons_zx_f0}, coupled with the 
exosystem~\eqref{eq:exosystem}, have the form of a nonlinear 
autonomous system:
\begin{subequations}\label{eq:copled_system_gradient_feedback}
    \begin{align}
    \dot z(t) &= F_c(z(t), y(t)),\\
    y(t) &= \nabla_x f(G_c(z(t)),\theta(t)),\\
    \dot \theta (t) &= s(\theta(t)). 
    \end{align}
\end{subequations}

\begin{definition}[\textbf{\textit{Exact asymptotic tracking}}]
\label{defn:track-critical-trajectory}
We say that~\eqref{eq:controller_equatons_zx_f0} 
\textit{exactly asymptotically tracks a critical trajectory} 
of~\eqref{eq:optimization_objective_f} if there exists a neighborhood of $(z_\circ^*,0)$ in $\mathcal{Z}\times\Theta$ such that, for each initial 
condition $(z(0),\theta(0))$ in the neighborhood, the solution 
of~\eqref{eq:copled_system_gradient_feedback} satisfies $y(t) \to 0$ as 
$t\to\infty$. 
\QEDB\end{definition}

By considering the entire class of optimization algorithms as 
in~\eqref{eq:controller_equatons_zx_f0}, and without requiring any assumption 
in addition to~\eqref{eq:gradient}, we begin by posing the following 
fundamental question.

\setcounter{problem}{-1}
\begin{problem}[\textbf{\textit{Minimal knowledge for exact asymptotic tracking}}]
\label{prob:assumptions}
Consider the class of optimization 
algorithms~\eqref{eq:controller_equatons_zx_f0}.
Determine the minimal necessary knowledge (beyond~\eqref{eq:gradient}), 
concerning the exosystem~\eqref{eq:exosystem} and the 
optimization~\eqref{eq:optimization_objective_f}, needed to 
design an algorithm from this class that tracks, with zero asymptotic error, 
the critical trajectories of~\eqref{eq:optimization_objective_f}.
\QEDB\end{problem}


Before addressing this question, we first outline various pieces of information on the exosystem and objective that the algorithm may (or may not) be able to access.
Two (non mutually-exclusive\footnote{Notice that requiring both (E1) and (E2) is equivalent to asking that both $s(\theta)$ as well as $\theta(0)$ are known.}) 
assumptions on the exosystem from the literature
(see, e.g., \cite{PAA-KK:06,EH:16,YZ-MS:98,MF-SP-VP-AR:17,RR-AM-UV:22,bastianello2024}) are:
\begin{enumerate}
\item[(E1)] The parameter signal $\theta(t)$ is known or measurable at each $t \in \realnneg,$ but no knowledge of $s(\theta)$ is available. 
\item[(E2)] The vector field $s(\theta)$ is known, but $\theta(0)$ (and hence 
also $\theta(t)$) is unknown.
\end{enumerate}
A second distinction can be made in terms of what knowledge concerning the 
objective function is available. In this 
case, several (non mutually-exclusive) options can be found in the 
literature (see, e.g.,~\cite{PAA-KK:06,EH:16,YZ-MS:98,MF-SP-VP-AR:17,RR-AM-UV:22,bastianello2024}):
\begin{enumerate}
\item[(O1)] The algorithm has access to oracle evaluations of~\eqref{eq:gradient}.
\item[(O2)] The analytical form of the gradient
$\nabla_{x} f(x,\theta)$ is known. 
\item[(O3)] The sensitivity $\nabla_{x\theta} f(x,\theta)$ of the cost with respect to the parameter is known. 
\item[(O4)] The Hessian $\nabla_{xx}^2 f(x,\theta)$ of the cost is known.
\item[(O5)] The objective $f(x,\theta)$ is quadratic in both $x$ and $\theta$.
\end{enumerate}

\begin{table*}[t]
\caption{Summary of required knowledge and known guarantees by various 
established approaches. The symbol $(\checkmark)$ is used when (O1) follows 
from (E1) and (O2).  See Remarks
\ref{rem:static_gradient}, \ref{rem:ppred_correct}, and 
\ref{rem:special_cases_optimization_alg} for discussions. \blue{Notice that particular assumptions on the optimization~\eqref{eq:optimization_objective_f} and the exosystem~\eqref{eq:exosystem} may differ across works.}}
\setlength{\tabcolsep}{4pt}
\centering\begin{tabular}{l|c|cc|ccccc}
  & Approximate & (E1) & (E2) & (O1) & (O2) & (O3) & (O4) & (O5) \\
    & solutions & $\theta(t)$ & $s(\theta)$ & 
    $(t,x)\mapsto \nabla_x f(x,\theta(t))$ & 
    $\nabla_x f(x, \theta)$ & 
    $\nabla_{x\theta} f(x, \theta)$& $\nabla_{xx}^2 f(x, \theta)$ & $f$ quadratic\\
  \hline
  Basic gradient flow~\cite{PAA-KK:06,EH:16} & \checkmark & & & \checkmark & & & & \\
  \rowcolor{gray!25}Control-based methods~\cite{bastianello2024} & &  & \checkmark & \checkmark &\checkmark  & \checkmark  & & \checkmark \\
  Prediction-correction~\cite{YZ-MS:98,MF-SP-VP-AR:17,RR-AM-UV:22} & & \checkmark & \checkmark & (\checkmark) & \checkmark & \checkmark & \checkmark & \\
  \rowcolor{gray!25} This work (Section~\ref{sec:parameter_feedback}) & & \checkmark & & (\checkmark)  & \checkmark & & &  \\
  This work (Section~\ref{sec:gradient_feedback}) & & & \checkmark & \checkmark & \checkmark &  &  & 
\end{tabular}
\label{tab:comparison_assumptions}
\end{table*}

These are not assumptions we impose a priori in this work, but rather widely used requirements in the 
literature~\cite{PAA-KK:06,EH:16,YZ-MS:98,MF-SP-VP-AR:17,RR-AM-UV:22,bastianello2024}.
Various methods have been developed in the literature based on 
different combinations of these assumptions: for example, under (E1) and (O1), 
the basic gradient flow algorithm~\eqref{eq:gradient_flow} converges 
(asymptotically) to a neighborhood of the critical points 
(see Remark~\ref{rem:static_gradient}). 
Under assumptions (E1), (E2), (O2), (O3), and (O4), the prediction-correction 
method~\eqref{eq:pred_correction} 
converges (asymptotically or in fixed-time) exactly to the optimizers
(see Remark~\ref{rem:ppred_correct}). 
Table~\ref{tab:comparison_assumptions} provides a comparison of the 
assumptions required by established methods and their positioning with respect to this work.

\begin{remark}[\textbf{\textit{Generality of the algorithm class}}]
\label{rem:special_cases_optimization_alg}
The gradient flow algorithm~\eqref{eq:gradient_flow} can be viewed  as a 
special case of~\eqref{eq:controller_equatons_zx_f0} with $F_c(z,y) = -\eta y$ and $G_c(z,y) = z$.
Similarly, the prediction-correction method~\eqref{eq:pred_correction} can 
be viewed as a special case of~\eqref{eq:controller_equatons_zx_f0} with
$z=(z_1, z_2)$ and 
\begin{align}\label{eq:pred_corr_FcGc}
F_c(z,y) &=\begin{bmatrix}
-\nabla_{x x}^\inv f(z_1 , z_2)  [y +\nabla_{x \theta} f(z_1, z_2) \cdot s(z_2)]\\
s(z_2)
\end{bmatrix}, \nonumber\\
G_c(z) &= z_1,
\end{align}
with $z_2(0) = \theta(0).$
It follows that~\eqref{eq:controller_equatons_zx_f0} is general enough to 
encompass these two established methods as special cases, and thus all 
conclusions concerning the class of 
algorithms~\eqref{eq:controller_equatons_zx_f0} made here will also apply to 
these methods.~
\QEDB\end{remark}

Our second objective is to design optimization algorithms from the 
class~\eqref{eq:controller_equatons_zx_f0} that track, with zero asymptotic 
error, the critical trajectories of~\eqref{eq:optimization_objective_f}. 
We formalize this notion next. 




\begin{problem}[\textbf{\textit{Design of dynamic gradient-feedback optimization algorithms}}]
\label{prob:grad_feedback}
Design $F_c(z,y),$  $G_c(z),$ and $n_c,$ requiring the minimal necessary 
knowledge for exact asymptotic tracking (as in Problem~\ref{prob:assumptions}), 
so that~\eqref{eq:controller_equatons_zx_f0} exactly asymptotically tracks a 
critical trajectory of~\eqref{eq:optimization_objective_f}.~
\QEDB\end{problem}

\section{The parameter-feedback problem}
\label{sec:parameter_feedback}

In many cases of interest~\cite{EH:16}, having access to 
$y(t)$ is a byproduct of having access to both the function
$\nabla_x f(x,\theta)$ and knowledge or measurements of 
$\theta(t)$; see Remark~\ref{rem:evaluating_gradient}. 
In these cases, assumptions (E1) and (O2) are automatically satisfied.
In this section, we analyze this scenario. 
We anticipate that the results derived in this section will also serve as an 
intermediate step to tackle the more challenging problem  where assumption (E1) 
is relaxed, which is the focus of Section~\ref{sec:gradient_feedback}. 

When the algorithm has access to both $\theta(t)$ at each time 
$t \in \realnneg,$ and the function $\nabla_x f(x,\theta)$, the measurements 
$y(t)$ do not provide additional information.
Hence, under the assumptions of this section, the optimization 
algorithm~\eqref{eq:controller_equatons_zx_f0} can be replaced by 
the algebraic relationship\footnote{While one could consider a dynamic 
optimization algorithm of the form $\dot z(t) = F_c(z(t), \theta(t))$ and 
$x(t) = G_c(z(t)),$ we will prove in 
Theorem~\ref{thm:solvability_parameter_feedback} that a dynamic structure is unnecessary.}
\begin{align}\label{eq:parameter_feedback}
x(t) = H_c(\theta(t)),
\end{align}
where $\map{H_c}{\Theta}{\real^n}$ is a mapping to be designed.
Because of the explicit dependence on $\theta(t),$ we will refer 
to~\eqref{eq:parameter_feedback} to as a \textit{parameter-feedback}  optimization algorithm. 
In analogy with~\eqref{eq:conditions_Fc_Gc}, for simplicity 
of the presentation, in this section, we will impose that 
$H_c(\theta)$ satisfies:
\begin{align*}
x^\star_\circ = H_c(0),
\end{align*}
and require that $H_c$ is of class $C^0.$

The composition of~\eqref{eq:exosystem}, 
\eqref{eq:controller_equatons_zx_f0_b}, and \eqref{eq:parameter_feedback} is given by:
\begin{subequations}\label{eq:copled_system_parameter_feedback}
    \begin{align}
    y(t) &= \nabla_x f(H_c(\theta(t)),\theta(t)), \\
    \dot \theta (t) &= s(\theta(t)).
    \end{align}
\end{subequations}
In analogy with Definition~\ref{defn:track-critical-trajectory}, 
we will say that the parameter-feedback algorithm~\eqref{eq:parameter_feedback} 
\textit{exactly asymptotically tracks a critical trajectory} 
of~\eqref{eq:optimization_objective_f} if there exists a neighborhood $\Theta_s \subset \Theta$ of the origin such that, for each initial 
condition $\theta(0) \in \Theta_s$, the solution 
of~\eqref{eq:copled_system_parameter_feedback} satisfies $y(t) \to 0$ as 
$t\to\infty$.

For the algorithm~\eqref{eq:parameter_feedback} considered in this section,
Problems~\ref{prob:assumptions} and \ref{prob:grad_feedback} are reformulated, respectively, 
as follows. 

\begin{problem}[\textbf{\textit{Minimal knowledge for parameter feedback}}]
\label{prob:minimal_knowledge_parameter}
Consider the class of optimization algorithms~\eqref{eq:parameter_feedback}.
Determine the minimal necessary knowledge 
concerning~\eqref{eq:optimization_objective_f} and~\eqref{eq:exosystem}, needed 
to design an algorithm from this class that exactly asymptotically tracks a 
critical trajectory of~\eqref{eq:optimization_objective_f}.
\QEDB\end{problem}

\begin{problem}[\textbf{\textit{Design of parameter-feedback optimization algorithms}}]
\label{prob:parameter_feedback}
Design $H_c(\theta),$  requiring minimal knowledge for parameter-feedback (as 
in Problem~\ref{prob:minimal_knowledge_parameter}), 
so that~\eqref{eq:parameter_feedback} exactly asymptotically tracks a 
critical trajectory of~\eqref{eq:optimization_objective_f}.~
\QEDB\end{problem}



\subsection{Fundamental results}
Solvability of the parameter-feedback problem will depend on the existence of a function that zeros the gradient on the set of limit points of the exosystem; we now define these notions.

\begin{definition}[\textbf{\textit{Mapping zeroing the gradient}}]\label{defn:gradient-invertibility}
We say that a mapping $H_c : \Theta\to\real^n$ \textit{zeros the gradient} at 
the point $\theta\in\Theta$~if
\begin{align}\label{eq:gradient_condition}
    0 &= \nabla_x f(H_c(\theta),\theta).
\end{align}
Moreover, we say that $H_c$ zeros the gradient
\textit{on a set} $\Theta_\circ \subseteq \Theta$ if~\eqref{eq:gradient_condition} holds for all $\theta \in \Theta_\circ$.
\QEDB\end{definition}



\begin{definition}[\textbf{\textit{Limit point and limit set}}]
A point $\theta_\omega \in \Theta$ is a \emph{limit point with respect 
to the initialization $\theta_\circ \in \Theta$}
if there exists a sequence $\{t_i\}_{i\in\naturalnneg}$ with 
$t_i\to\infty$ as ${i\to\infty}$ such that the exosystem~\eqref{eq:exosystem}
with $\theta(0) = \theta_\circ$ satisfies $\theta({t_i})\to\theta_\omega$ as 
${i\to\infty}$. 
Let $\Omega(\theta_\circ)$ denote the set of all limit points 
(i.e., for all sequences $t_i$) 
of~\eqref{eq:exosystem} with respect to the initialization
$\theta_\circ\in\Theta.$
Given $\Theta_\circ \subseteq \Theta,$ the set
$\Omega(\Theta_\circ) := 
\cup_{\theta_\circ \in \Theta_\circ} \Omega(\theta_\circ)
$ 
is called the \textit{limit set with respect 
to initializations in $\Theta_\circ$}~\cite{birkhoff}.~
\QEDB\end{definition}

Intuitively, $\Omega(\Theta_\circ)$ denotes the set of all limit 
points (equilibria, limit cycles, etc.) that can be reached by the exosystem 
when initialized at points in $\Theta_\circ$. 
By the boundedness trajectories assumption 
(Assumption~\ref{as:exosystem}), $\Omega(\Theta_\circ)$ is a bounded set.


The following result characterizes all parameter-feedback optimization
algorithms that achieve asymptotic tracking.

\begin{theorem}[\textbf{\textit{Existence and characterization of 
parameter-feedback algorithms}}]
\label{thm:solvability_parameter_feedback}
Let Assumptions~\ref{as:convexity_lipschitz_f0}--\ref{as:exosystem} hold. The 
parameter-feedback algorithm~\eqref{eq:parameter_feedback} 
asymptotically tracks a critical trajectory 
of~\eqref{eq:optimization_objective_f} if and only if there exists a 
neighborhood $\Theta_\circ\subset \Theta$ of the origin such that the mapping 
$H_c$ zeros the gradient on the limit set $\Omega(\Theta_\circ)$.
\end{theorem}

\begin{proof}
\textit{(Only if)} Suppose $y(t)\to 0$ as $t\to\infty$ for initializations 
$\theta(0) \in \Theta_\circ$; 
we will show that $H_c$ zeros the gradient on $\Omega(\Theta_\circ)$. 
By Assumption~\ref{as:exosystem}, the trajectories of the exosystem are bounded,
and thus, by the Bolzano–Weierstrass theorem, there exists an increasing 
subsequence $\{t_i\}_{i\in\naturalnneg}$ such that $\theta(t_i)$ converges to 
some limit point $\theta_\omega \in \Omega\left(\theta(0)\right)$. We then have:
\begin{align}\label{eq:limit_y_sequence}
\lim_{i\to\infty} y({t_i})
    = \lim_{i\to\infty} \nabla_x f(H_c(\theta({t_i})),\theta({t_i}))
    = \nabla_x f(H_c(\theta_\omega),\theta_\omega),
\end{align}
where the second identity follows by the continuity of the gradient (see 
Assumption~\ref{as:convexity_lipschitz_f0}) and that of $H_c$.
Because, $y(t)\to 0$, the left-hand side of~\eqref{eq:limit_y_sequence} is 
equal to zero, and thus~\eqref{eq:limit_y_sequence} implies that
$\nabla_x f(H_c(\theta_\omega),\theta_\omega)=0.$ 
Since this holds for any limit point $\theta_\omega\in \Omega(\Theta_\circ)$, 
the statement follows.

\textit{(If)} Suppose $\theta(0) \in \Theta_\circ$ and that $H_c$ zeros the 
gradient on $\Omega(\Theta_\circ)$. 
Then, the right-hand side of~\eqref{eq:limit_y_sequence} is equal to zero, 
which implies the existence of a sequence $t_i$ such that $y({t_i})\to 0$ as 
${i\to\infty}$. Since this holds for any limit point 
$\theta_\omega\in\Omega(\theta(0))$, any convergent subsequence of~$y(t)$ 
converges to zero. Moreover,~$y(t)$ is bounded due to the Lipschitz continuity of 
the gradient, so~$y(t)$ also converges to zero as $t\to\infty$.
By iterating the reasoning for all $\theta(0) \in \Theta_\circ,$ it follows 
that $y(t) \to 0$ for all $\theta(0) \in \Theta_\circ$, and the claim
follows. 
\end{proof}

The theorem shows that the existence of a parameter-feedback algorithm
is dependent upon the existence of a mapping that zeros the gradient. An 
application of this theorem is illustrated in Example~\ref{ex:Hc_maynot_exist}, 
which shows that a parameter-feedback algorithm may fail to exist in some  
circumstances.
The theorem also provides a full characterization of all parameter-feedback 
algorithms that achieve asymptotic tracking. Loosely speaking, 
$x = H_c(\theta)$ is a parameter-feedback algorithm if and only if it zeros the 
gradient on the limit set of the exosystem.
As a byproduct, the result also provides a necessary and sufficient condition 
for the solvability of the parameter-feedback problem: the problem is 
solvable if and only if the set of solutions to the system of equations 
$0 = \nabla_x f(x, \theta)$ can be expressed, at all limit points of the 
exosystem, as the graph of a function $x = H_c(\theta).$
We present a set of sufficient conditions for this to hold in 
Section~\ref{sec:sufficient_conditions}.

By Theorem~\ref{thm:solvability_parameter_feedback}, the problem of designing 
a parameter-feedback algorithm for exact asymptotic tracking can be reduced to
that of finding a mapping $x = H_c(\theta)$ that zeros the gradient. Therefore, we have the following.

\textit{Answer to Problem~\ref{prob:minimal_knowledge_parameter}:}
When a parameter-feedback algorithm exists, the minimal knowledge needed to design such an 
algorithm is (O2). 
Notice that assumption (E1) is implicitly required for~\eqref{eq:parameter_feedback} to be implementable.

We illustrate the applicability of the result and the necessity of the provided 
condition in the following example.

\begin{figure}[t]
\centering \subfigure[]{\includegraphics[width=.48\columnwidth]{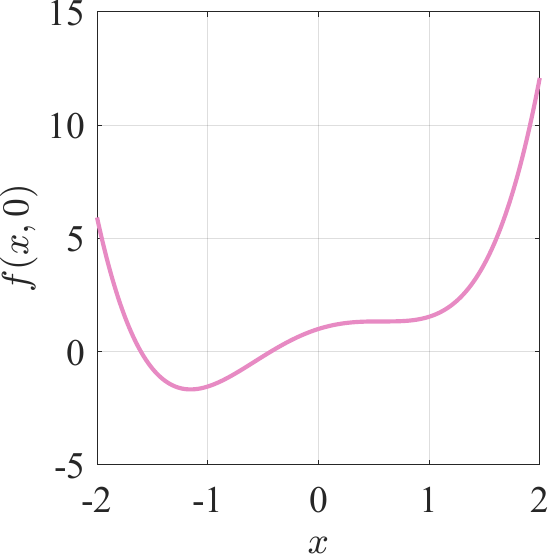}}
\hfill
\centering \subfigure[]{\includegraphics[width=.48\columnwidth]{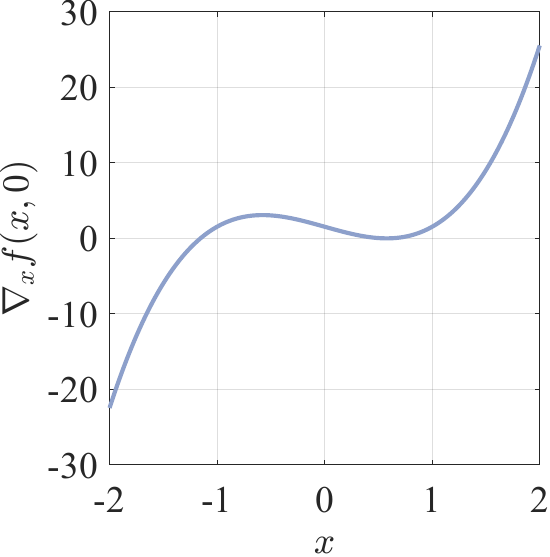}} 
\caption{Investigation of the condition~\eqref{eq:gradient_condition}. 
(Left) Loss function $f(x, \theta)$ studied in 
Example~\ref{ex:Hc_maynot_exist}, plotted for $\theta=0.$ 
(Right) Gradient $\nabla f(x, \theta).$
The function $f(x,0)$ admits two critical points: 
$x^\star_{\circ,1} = \frac{1}{\sqrt{3}}$ and 
$x^\star_{\circ,2} = -\frac{2}{\sqrt{3}}.$ At $x^\star_{\circ,1},$ condition
\eqref{eq:gradient_condition} is not satisfied, since with an upward shift 
of the graph of $\nabla_xf(x,0),$  $x^\star_{\circ,1}$ is no longer a critical 
point of $f(x,0).$
On the other hand, \eqref{eq:gradient_condition} holds for 
$x^\star_{\circ,2},$ since $x^\star_{\circ,2}$ varies continuously as $\theta$ 
is perturbed. 
See Example~\ref{ex:Hc_maynot_exist} for a discussion.
}
\vspace{-.5cm}
\label{fig:f_df}
\end{figure}

\begin{example}\label{ex:Hc_maynot_exist}
Consider an instance of~\eqref{eq:optimization_objective_f} with $n=p=1,$ 
\begin{align}
\label{eq:example_parameter_feedback}
    f(x, \theta) = (x-1)^2(x+1)^2 + \frac{8}{3\sqrt{3}}x + \theta x.
\end{align}
See Fig.~\ref{fig:f_df}(a) for an illustration of this function. For 
$\theta=0,$ the optimization problem associated 
with~\eqref{eq:example_parameter_feedback}  admits two critical points: 
$x^\star_{\circ,1} = \frac{1}{\sqrt{3}}$ and 
$x^\star_{\circ,2} = -\frac{2}{\sqrt{3}};$ indeed, it follows from direct inspection that
$\nabla_xf(x^\star_{\circ,1},0)=0$ and $\nabla_xf(x^\star_{\circ,2},0)=0.$
At the critical point $x^\star_{\circ,1},$ a function $H_c(\theta)$ as 
in~\eqref{eq:gradient_condition} does not exist. This can be visualized 
with the aid of Fig.~\ref{fig:f_df}(b): if $\theta=0$ is perturbed to 
$\theta+\epsilon,$ $\epsilon>0,$ the graph of $\nabla_xf(x,0)$
(illustrated in Fig.~\ref{fig:f_df}(b)) shifts upward and the equation 
$\nabla_xf(x,\epsilon)=0$ no longer admits a solution
in a neighborhood of $x^\star_{\circ,1}.$
%

On the other hand, for the critical point $x^\star_{\circ,2},$ any 
arbitrarily small upward or downward shift of the graph of 
$\nabla_xf(x,0)$ results in a continuous perturbation of  
$x^\star_{\circ,2}$ (see Fig.~\ref{fig:f_df}(b)),
thus suggesting existence of $x=H_c(\theta)$ as 
in~\eqref{eq:gradient_condition}. 
This graphical observation can be formalized with 
the aid of the implicit function theorem~\cite{WR:76}, as described next. 
Define  $F(x, \theta) := \nabla_x f(x,\theta)$ and notice that $F$ is 
continuously differentiable with  $F(x^\star_{\circ,2},0)=0.$ By the implicit 
function theorem, there exists a neighborhood $\Theta_\circ$ of 
$x^\star_{\circ,2}$ and a function $\map{H_c}{\Theta_\circ}{\real^n}$ 
such that $F(H_c(\theta),\theta)=0$ in $\Theta_\circ,$ provided that 
$\left. \frac{\partial F}{\partial x}\right|_{(x,\theta)=(x^\star_{\circ,2},0)} \neq 0.$ 
By inspection, it is immediate to see that the latter condition is satisfied. 
\QEDB\end{example}

We conclude by observing that the proof of 
Theorem~\ref{thm:solvability_parameter_feedback} is constructive in that 
it provides an explicit procedure to construct parameter-feedback 
algorithms. 

\textit{Answer to Problem~\ref{prob:parameter_feedback}:}
When a parameter-feedback algorithm exists, a procedure to design it  
requiring minimal knowledge is given by: 1) determine the limit set $\Omega(\Theta_\circ)$ for 
the given set of initial conditions $\Theta_\circ$; 2) find a mapping $H_c(\theta)$ that zeros 
the gradient on the limit set; and, 3) the parameter-feedback algorithm is given 
by~\eqref{eq:parameter_feedback}. 


We conclude by illustrating the parameter-feedback design procedure on a quadratic problem.

\begin{example}
\label{ex:quadratic_cost}
Consider an instance of~\eqref{eq:optimization_objective_f} with quadratic cost 
and time-variability that depends linearly on $\theta(t)$ (which, e.g., has 
been investigated in~\cite{bastianello2024}):
\begin{align}\label{eq:quadratic_cost_example}
    f(x(t),\theta(t)) = \tfrac{1}{2} x(t)^\tsp R x(t) + x(t)^\tsp Q \theta(t),
\end{align}
with matrices $R\in \symmetric^n$ and $Q \in \real^{n \times p}$. This loss is convex and Lipschitz smooth and thus satisfies 
Assumption~\ref{as:convexity_lipschitz_f0}.
In this case, the signal we wish to regulate to zero is: 
$$y(t) = \nabla_x f(x(t),\theta(t)) = R x(t) + Q \theta(t).$$ 
For arbitrary $\theta,$ this problem admits a critical point if and only if $\im Q \subseteq \im R$, in which case $x_\circ^\star$ is unique. Applying Theorem~\ref{thm:solvability_parameter_feedback} amounts to finding a linear transformation $H_c \in \real^{n \times p}$ such that $0 = (R H_c + Q) \theta$ for all $\theta$ in a neighborhood of the origin. Assuming $\im Q\subseteq \im R$, we can choose $H_c = - R^\dagger Q$, where $R^\dagger$ is the pseudo-inverse of $R$. Note that, by substituting into~\eqref{eq:copled_system_parameter_feedback}, 
we have ${y(t) = R H_c\theta(t) + Q \theta(t)=0}$ for all times $t \in \realnneg.$
Namely, the gradient is identically zero. This property holds because the mapping $H_c(\theta)$ derived here zeros the gradient globally (and not only in some neighborhood of the origin).
More generally, this feature will hold for any problem for which the gradient of the 
cost admits a mapping that zeros the gradient globally. A set of sufficient 
conditions to check for this property are given in 
Theorem~\ref{thm:suff_cond_parameter_feedback_global}.
\QEDB\end{example}

\begin{remark}[\textbf{\textit{Prediction-correction algorithms asymptotically 
compute maps that zero the gradient}}]
    As an illustration, consider the quadratic problem studied in Example~\ref{ex:quadratic_cost}.
    Let $x_\text{PC}(t)$ denote the iterates of the prediction-correction algorithm~\eqref{eq:pred_correction} applied to the quadratic objective~\eqref{eq:quadratic_cost_example}, and let $x_\text{PF}(t) = H_c \theta(t)$ with $H_c = -R^\dagger Q$ be the parameter-feedback algorithm. Then, the error $e(t) = x_\text{PC}(t) - x_\text{PF}(t)$ satisfies the dynamics $\dot e(t) = -e(t)$, indicating that the iterates of prediction-correction converge exponentially to those of the parameter-feedback algorithm, which zeros the gradient.
\QEDB\end{remark}

\subsection{Sufficient conditions for the existence of a parameter-feedback algorithm}
\label{sec:sufficient_conditions}

Existence of a map $x=H_c(\theta)$ as in~\eqref{eq:gradient_condition} can be 
ensured for general problems with the aid of the implicit function 
theorem~\cite{WR:76}, as illustrated by the following result.

\begin{proposition}[\textbf{\textit{Existence of local parameter-feedback algorithms}}]
\label{prop:suff_cond_parameter_feedback}
Let Assumptions~\ref{as:convexity_lipschitz_f0}--\ref{as:exosystem} hold, and let $X_\circ \times \Theta_\circ$ be some 
neighborhood of $(x_\circ^\star,0).$ 
Further, assume that: 
\begin{enumerate}
\item[(i)] the loss function $f$ is $C^1$ on $X_\circ \times \Theta_\circ,$
\item[(ii)] $x \mapsto f(x,\theta)$ is $C^2$ on $X_\circ$ for each $\theta \in \Theta_\circ,$ and
\item[(iii)] the Hessian
$\left. \nabla^2_{xx} f(x, \theta) \right|_{x=x^\star_\circ, \theta=0}$ is 
positive definite.
\end{enumerate}
Then, there exists a $C^0$ mapping $\map{H_c}{\Theta}{\real^n}$ that zeros the gradient on some neighborhood of the origin of $\Theta$.
\QEDB\end{proposition}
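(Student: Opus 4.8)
The plan is to reduce the statement to the implicit function theorem applied to the gradient map, exactly as in Example~\ref{ex:Hc_maynot_exist}. I would define $F(x,\theta) := \nabla_x f(x,\theta)$, regarded as a map into $\real^n$, and observe that $F(x^\star_\circ,0)=0$ by~\eqref{eq:gradient_circ}. Solving the local gradient-zeroing condition~\eqref{eq:gradient_condition} then amounts to solving $F(x,\theta)=0$ for $x$ as a function of $\theta$ near the origin, with $H_c(0)=x^\star_\circ$. The derivative of $F$ in the $x$-variable is precisely the Hessian, $\partial_x F(x,\theta)=\nabla^2_{xx} f(x,\theta)$, which exists and is continuous in $x$ by hypothesis~(ii); by hypothesis~(iii) the matrix $\partial_x F(x^\star_\circ,0)=\nabla^2_{xx} f(x^\star_\circ,0)$ is positive definite, hence invertible. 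This invertibility at the base point is the structural ingredient the implicit function theorem~\cite{WR:76} requires.

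Next I would record the regularity actually available and match it to an appropriate form of the theorem. Hypothesis~(i) makes $F=\nabla_x f$ jointly continuous on $X_\circ\times\Theta_\circ$, while~(ii) makes $x\mapsto F(x,\theta)$ continuously differentiable for each fixed $\theta$; together with invertibility of $\partial_x F(x^\star_\circ,0)$, these yield a neighborhood $\Theta_0$ of the origin and a continuous map $\map{H_c}{\Theta_0}{\real^n}$ with $H_c(0)=x^\star_\circ$ and $F(H_c(\theta),\theta)=0$ for all $\theta\in\Theta_0$, which is exactly~\eqref{eq:gradient_condition}. Convexity (Assumption~\ref{as:convexity_lipschitz_f0}) together with~(iii) reinforces this picture and makes it robust: since $x\mapsto\nabla^2_{xx}f(x,0)$ is continuous and positive definite at $x^\star_\circ$, the loss $f(\cdot,0)$ is strongly convex on a closed ball $\bar B=\bar B(x^\star_\circ,r)$, so $\nabla_x f(\cdot,0)$ is strongly monotone there and satisfies $\langle F(x,0),x-x^\star_\circ\rangle>0$ on $\partial B$. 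By joint continuity of $F$ this strict outward-pointing boundary condition persists for all $\theta$ close to the origin, which (via a degree/Brouwer argument) guarantees that $F(\cdot,\theta)$ retains a zero inside $B$, giving existence of the required point $x=H_c(\theta)$ for every such $\theta$.

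Finally, since Definition~\ref{defn:gradient-invertibility} only requires $H_c$ to zero the gradient on \emph{some} neighborhood of the origin, I would extend the locally defined map from $\Theta_0$ to all of $\Theta$ while preserving continuity (e.g., by a Tietze extension, or by precomposing with a continuous retraction onto $\bar\Theta_0$), which leaves the gradient-zeroing property on $\Theta_0$ intact. I expect the main obstacle to be the regularity gap: hypotheses~(i)--(ii) do not force $\nabla_x f$ to be jointly $C^1$ (the Hessian need not be jointly continuous, and the mixed partial $\nabla_{x\theta}f$ need not even exist), so the textbook $C^1$ statement of the implicit function theorem does not apply verbatim, and one must either invoke a continuous version of it or supply the topological/monotonicity argument sketched above. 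The delicate point there is obtaining a single-valued, continuous selection of the zero for each $\theta$: convexity makes the zero set of $F(\cdot,\theta)$ convex and the strong convexity at $\theta=0$ pins it to $x^\star_\circ$, but establishing uniqueness and continuity for the nearby $\theta$ is the step that has to be carried out with care, and it is what ultimately justifies the merely $C^0$ (rather than $C^1$) conclusion.
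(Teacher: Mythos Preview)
Your approach is essentially identical to the paper's: define $F(x,\theta)=\nabla_x f(x,\theta)$, note that $F$ is jointly $C^0$ with $x\mapsto F(x,\theta)$ of class $C^1$ and $\partial_x F(x^\star_\circ,0)$ invertible by~(iii), and apply the implicit function theorem to $F=0$. The regularity gap you correctly flag is handled in the paper not by your degree/monotonicity backup argument but by invoking a version of the implicit function theorem valid under precisely these hypotheses (jointly $C^0$, $C^1$ in $x$), citing \cite[Thm.~1]{HN-FF:22}; your Tietze-type extension to all of $\Theta$ is a detail the paper leaves implicit.
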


\begin{proof}
Define $F(x,\theta) = \nabla_x f(x,\theta)$ and note that, under the stated assumptions, $F$ is $C^0$ on $X_\circ \times \Theta_\circ$, the mapping $x \mapsto F(x,\theta)$ is $C^1$ on $X_\circ$ for all $ \theta \in \Theta_\circ$, and
\[
    \det \left[ \frac{\partial F(x,\theta)}{\partial x} \right]_{(x, \theta) = (x_\circ^\star,0)} \neq 0.
\]
Then, the result follows from the implicit function theorem (see, e.g., \cite[Thm.~1]{HN-FF:22}) applied to the first-order optimality conditions $0 = F(x, \theta)$ at the point $(x, \theta) = (x^\star_\circ,0)$.
\end{proof}

In other words, Proposition~\ref{prop:suff_cond_parameter_feedback} states that 
when the loss function is sufficiently smooth and the Hessian is locally 
positive definite, the existence of $H_c(\theta(t))$ is guaranteed by the 
implicit function theorem. Notice that these conditions are sufficient but not 
necessary.

\begin{remark}[\textbf{\textit{Computing mappings that zero the gradient}}]
Under the assumptions of 
Proposition~\ref{prop:suff_cond_parameter_feedback},
the implicit function theorem gives that the linear function
\begin{multline}\label{eq:Hc-linear}
\hat H_c(\theta) = x_\circ^\star -
\bigl( \nabla^2_{xx} f(x, \theta) \big|_{x=x^\star_\circ \theta=0}
\bigr)^\inv \\ \nabla_{x\theta} f(x, \theta) \big|_{x=x^\star_\circ, \theta=0} \theta,
\end{multline}
is a first-order approximation of a mapping that zeros the gradient on a neighborhood of the origin.
\QEDB\end{remark}

Although the conditions in Proposition~\ref{prop:suff_cond_parameter_feedback} are immediate to verify, the convergence claims of
Theorem~\ref{thm:solvability_parameter_feedback} and 
Proposition~\ref{prop:suff_cond_parameter_feedback} are of local nature; 
namely, $y(t) \rightarrow 0$ is ensured provided that $\theta(0)$ is 
sufficiently close to the origin. The following result provides a sufficient 
condition for global convergence.

\begin{theorem}[\textbf{\textit{Existence of global  parameter-feedback algorithms}}]
\label{thm:suff_cond_parameter_feedback_global}
Let Assumptions~\ref{as:convexity_lipschitz_f0}--\ref{as:exosystem} hold.
Further, assume: 
\begin{enumerate}
\item[(i)] the loss function $f$ is $C^1$ on $\real^n \times \Theta$, 
\item[(ii)] $x \mapsto f(x,\theta)$ is $C^3$ on $\real^n$ for each $\theta \in \Theta$, 
\item[(iii)] the Hessian $\nabla^2_{xx} f(x, \theta) $ is positive definite on $\real^n \times \Theta$, 
\item[(iv)] the mappings $\nabla_{xx}^2 f$ and $ \frac{\partial \nabla_{xx}^2 f}{\partial x}$ are $C^0$ on $\real^n \times \Theta.$
\end{enumerate}
Then, there exits a $C^0$ mapping $\map{H_c}{\Theta}{\real^n}$ that zeros the gradient everywhere in $\Theta.$
\QEDB\end{theorem}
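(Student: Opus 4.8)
The plan is to run a continuation (connectedness) argument over $\Theta$, seeded by the critical point at $\theta=0$. First, observe that condition~(iii) makes $x\mapsto \nabla_x f(x,\theta)$ strictly monotone for each fixed $\theta$, hence injective; consequently, whenever a point $x$ with $\nabla_x f(x,\theta)=0$ exists, it is unique, and we obtain a well-defined candidate map $H_c(\theta)$ on its natural domain. By the standing assumptions there is $x^\star_\circ$ with $\nabla_x f(x^\star_\circ,0)=0$, so the set $A:=\{\theta\in\Theta : \exists\,x,\ \nabla_x f(x,\theta)=0\}$ is nonempty ($0\in A$) and carries the single-valued map $\map{H_c}{A}{\real^n}$. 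Since $\Theta$ is a connected neighborhood of the origin, it suffices to show that $A$ is both open and closed in $\Theta$.

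For openness and regularity I would apply the implicit function theorem at each $(H_c(\theta_\ast),\theta_\ast)$ with $\theta_\ast\in A$: by~(iii) the partial Jacobian $\nabla^2_{xx} f$ is positive definite, hence invertible there, and conditions~(i),(ii),(iv) supply precisely the continuity of $\nabla_x f$ and of the second-order derivatives required to invoke the same version used in Proposition~\ref{prop:suff_cond_parameter_feedback} (cf.~\cite{HN-FF:22}). This yields a neighborhood of $\theta_\ast$ contained in $A$ on which $H_c$ is continuous, so $A$ is open and $H_c\in C^0(A)$. The same data let one differentiate the identity $\nabla_x f(H_c(\theta),\theta)\equiv 0$ to obtain the continuation relation $\partial_\theta H_c=-[\nabla^2_{xx} f]^{-1}\,\nabla_{x\theta} f$, whose right-hand side is continuous by~(iv); this is what allows the local graph to be propagated along paths in $\Theta$.

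The crux is closedness of $A$, and I expect this to be the main obstacle. Given $\theta_k\to\theta_\ast\in\Theta$ with $\theta_k\in A$, set $x_k:=H_c(\theta_k)$; if $\{x_k\}$ admits a bounded subsequence, then any limit point $x_\ast$ satisfies $\nabla_x f(x_\ast,\theta_\ast)=0$ by continuity of the gradient (condition~(i)), whence $\theta_\ast\in A$. The entire difficulty is therefore the a priori bound ruling out $\norm{x_k}\to\infty$ as $\theta$ approaches the boundary of the continuation region---equivalently, that the gradient map $x\mapsto\nabla_x f(x,\theta)$ is proper (so that $0$ remains in its range) uniformly for $\theta$ in compact subsets of $\Theta$. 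This is exactly where the \emph{global} conclusion must be earned, in contrast to the purely local statement of Proposition~\ref{prop:suff_cond_parameter_feedback}. I would try to produce the bound by integrating the continuation relation along a path from $0$ to $\theta$ and controlling $\norm{H_c(\theta)-x^\star_\circ}$, using the Lipschitz property of Assumption~\ref{as:convexity_lipschitz_f0} (which yields $\nabla^2_{xx} f\preceq L\,I$) together with positive definiteness from~(iii) to estimate $[\nabla^2_{xx} f]^{-1}\nabla_{x\theta} f$; showing that this quantity cannot force finite-``time'' blow-up is the delicate step, and it is where conditions~(iii)--(iv) and the coercivity/properness of $f(\cdot,\theta)$ do the real work.
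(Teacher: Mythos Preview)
The paper's proof is a one-liner: it verifies that conditions (i)--(iv) translate into hypotheses (B1)--(B5) of a \emph{global} implicit function theorem from the literature \cite[Thm.~2]{HN-FF:22} (see also \cite[Thm.~6]{AA-SZ:19}) applied to $F(x,\theta)=\nabla_x f(x,\theta)$, and the conclusion follows immediately. Your open--closed continuation argument is, in spirit, an attempt to reprove that global implicit function theorem from scratch inside the paper. That is a legitimate route in principle, but it is longer, and more importantly your outline contains a concrete gap that blocks the closedness step as written.

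The issue is the continuation relation $\partial_\theta H_c = -[\nabla^2_{xx} f]^{-1}\nabla_{x\theta} f$, which you propose to integrate along paths to rule out blow-up. Hypothesis~(i) only gives $f\in C^1(\real^n\times\Theta)$, so $\nabla_x f$ is merely continuous in $(x,\theta)$; the mixed derivative $\nabla_{x\theta} f$ need not exist, let alone be continuous. Conditions (ii)--(iv) add regularity \emph{only in the $x$-direction}; none of them upgrades the $\theta$-regularity of $\nabla_x f$. Consequently, your proposed a~priori bound cannot be obtained by integrating that ODE under the stated assumptions. Without it, the closedness argument collapses to ``positive definiteness of $\nabla^2_{xx} f$ prevents $\norm{H_c(\theta_k)}\to\infty$,'' which is false in general: the Hessian can be positive definite everywhere yet degenerate at infinity (take $f(x,\theta)=\sqrt{1+x^2}+\theta x$), so mere pointwise positive definiteness does not yield the properness you need. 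The Lipschitz bound from Assumption~\ref{as:convexity_lipschitz_f0} gives an \emph{upper} bound on the Hessian, not a lower one, so it does not help here either. A smaller point: your argument also tacitly assumes $\Theta$ is connected, which is not stated.

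The upshot is that the closedness/properness step---which you correctly flag as the crux---genuinely requires the machinery packaged in the cited global implicit function theorem; that result is engineered so that $x$-smoothness plus joint continuity (exactly (i)--(iv)) suffice, without any $\theta$-differentiability. Rather than rebuilding that argument, the paper simply invokes it.
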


\begin{proof}
Define $F(x,\theta) = \nabla_x f(x,\theta)$ and note that, under the stated assumptions, $F$ is $C^0$ on $\real^n \times \Theta$, the mapping $x \mapsto F(x,\theta)$ is $C^2$ on $\real^n$ for all $\theta \in \Theta$,
\[
\det \left[ \frac{\partial F(x, \theta)}{\partial x} \right] \neq 0, \quad \forall (x,\theta)\in \real^n\times\Theta,
\]
and the mappings $\frac{\partial F}{\partial x}$ and $\frac{\partial^2 F}{\partial x^2}$ are $C^0$ in $\real^n \times \Theta$. Hence, assumptions (B1)--(B5) of a global version of the implicit function theorem~\cite[Thm.~2]{HN-FF:22} (see also~\cite[Thm.~6]{AA-SZ:19}) are satisfied for the first-order optimality conditions $0 = F(x, \theta)$, and the claim follows. 
\end{proof}

Theorem~\ref{thm:suff_cond_parameter_feedback_global} shows that, under additional continuity assumptions on the loss function  and when the Hessian of the loss is positive definite everywhere, there exists a mapping $\map{H_c}{\Theta}{\real^n}$ that zeros the gradient everywhere in $\Theta.$

By combination of Theorem~\ref{thm:suff_cond_parameter_feedback_global} and
Theorem~\ref{thm:solvability_parameter_feedback}, it follows that under the 
assumptions of Theorem~\ref{thm:suff_cond_parameter_feedback_global}, the parameter-feedback algorithm $x(t) = H_c(\theta(t))$ ensures that $y(t) \rightarrow 0$ as $t \rightarrow \infty$ for all initial conditions $\theta(0) \in \Theta$. This result ensures the existence of parameter-feedback algorithms for the particular class of strictly convex loss functions~\cite{MF-SP-VP-AR:17,bastianello2024}.

\section{The dynamic gradient-feedback problem}
\label{sec:gradient_feedback}

In the previous section, we analyzed algorithms assuming knowledge of the function 
$\nabla_x f(x,\theta)$ and of the signal $\theta(t), t \in \realnneg$ 
(cf. assumptions (E1) and (O2)). 
In this section, we relax these assumptions and require only 
that $y(t)$ is available through oracle evaluations of~\eqref{eq:gradient}.
For this reason, we will shift our attention back to the general class of 
dynamic gradient-feedback algorithms~\eqref{eq:controller_equatons_zx_f0}.

Because $\theta(t)$ is unmeasurable by the algorithm, the temporal variability 
of the cost can only be evaluated through measurements of $y(t);$ for this 
reason, we impose the following.

\begin{assumption}[\textbf{\textit{Detectability of the exosystem}}]
\label{as:detectability_exosystem}
Let
\begin{align}\label{eq:jacobians}
Q := \left[\frac{\partial \nabla_x f}{\partial \theta}\right]_{(x,\theta)=(x_\circ^\star,0)}, &&
S := \left[\frac{\partial s}{\partial \theta}\right]_{\theta=0}.
\end{align}
The pair $(Q,S)$ is detectable. 
\QEDB\end{assumption}

The detectability property of 
Assumption~\ref{as:detectability_exosystem} is equivalent to the existence of 
a local exponential observer for all (non-asymptotically stable) modes of 
$\theta(t)$ based on measurements $y(t)$~\cite[Cor.~3.4]{WL-CB:94}.
Undetectability corresponds to a redundant description of the exogenous 
signal: if some modes of $\theta(t)$ are undetectable, these modes do not 
influence the gradient (at $(x_\circ^\star,0)$) and, as a consequence, 
the critical points of~\eqref{eq:optimization_objective_f}; therefore, they can be removed from 
the optimization problem~\eqref{eq:optimization_objective_f} without altering 
its critical points.
In other words, Assumption~\ref{as:detectability_exosystem} ensures 
that~\eqref{eq:exosystem} does not contain redundancies.

\subsection{Existence and characterization of dynamic gradient feedback algorithms}

We begin by considering the class of 
algorithms~\eqref{eq:controller_equatons_zx_f0}, restricted to ensure local 
exponential stability, and we characterize a set of necessary and sufficient 
conditions for this class to achieve exact asymptotic tracking.

\begin{theorem}[\textbf{\textit{Gradient-feedback algorithm characterization}}]
\label{thm:internal_model_principle}
Suppose Assumptions~\ref{as:convexity_lipschitz_f0}--\ref{as:detectability_exosystem} hold, and assume that $F_c(z,y)$ and 
$G_c(z)$ are such that the equilibrium $z=z^\star_\circ$~of 
\begin{align}\label{eq:exponential_equilibrium}
\dot z(t) &= F_c(z(t), \nabla_x f(G_c(z(t)),0)),
\end{align}
is locally exponentially stable. The optimization
algorithm~\eqref{eq:copled_system_gradient_feedback} exactly asymptotically 
tracks a critical trajectory of~\eqref{eq:optimization_objective_f} if and only 
if 
there exists a neighborhood $\Theta_\circ \subset \Theta$ of the origin
and a $C^2$ mapping $z = \sigma(\theta)$ with $\sigma(0)=z_\circ^\star$, such 
that:
\begin{subequations}\label{eq:dynamic_condition}
\begin{align}
\left. \frac{\partial \sigma (\theta)}{\partial \theta} \right\vert_{\theta=\theta_\omega} s(\theta_\omega) &= F_c(\sigma(\theta_\omega),0),\label{eq:dynamic_condition_a}\\
0 &= \nabla_x f(G_c(\sigma(\theta_\omega)),\theta_\omega),\label{eq:dynamic_condition_b}
\end{align}
at all limit points $\theta_\omega \in \Omega(\Theta_\circ)$.
\end{subequations}
\end{theorem}

\begin{proof}
\textit{(Only if)}. We first prove that $\lim _{t \rightarrow \infty} y(t) = 0$ implies~\eqref{eq:dynamic_condition}.
The coupled dynamics~\eqref{eq:copled_system_gradient_feedback} have the form:
\begin{align}\label{eq:first_order_proof}
\dot z &= (A_c+B_cRM) z + B_cQ \theta + \chi(z, \theta), \nonumber\\
\dot \theta &= S \theta + \psi(\theta),
\end{align}
for some mappings $\chi(x, \theta)$ and $\psi(\theta)$ that vanish at the origin along with their first-order derivatives, with $Q$ and $S$ are defined in~\eqref{eq:jacobians}, and
\begin{align*}
A_c &= \left[\frac{\partial F_c}{\partial z}\right]_{(z,y)=(z_\circ^\star,0)}, &
B_c &= \left[\frac{\partial F_c}{\partial y}\right]_{(z,y)=(z_\circ^\star,0)}, \\
R&= \left[\frac{\partial \nabla_x f}{\partial x}\right]_{(x,\theta) = (x_\circ^\star,0)}, &
M&= \left[\frac{\partial G_c}{\partial z}\right]_{z=z_\circ^\star}.
\end{align*}
By assumption, the eigenvalues of the matrix $A_c+B_cRM$ are in 
$\complex^-$. By Theorem~\ref{thm:existence_center_manifold}, the 
system~\eqref{eq:first_order_proof} has a center manifold at 
$(z^\star_\circ,0)$: the graph of a mapping 
$z = \sigma(\theta),$ with $\sigma(\theta)$ satisfying (see~\eqref{eq:differential_equation_center_manifold})
\[
    \sigma ( s(\theta)) = F_c(\sigma(\theta),\nabla_x f(G_c(\sigma(\theta)),\theta)).
\]
Next, similarly to the parameter-feedback case (Theorem~\ref{thm:solvability_parameter_feedback}), by 
Assumption~\ref{as:exosystem}, $\theta(t_i)$ converges to some limit point 
$\theta_\omega \in \Omega\left(\theta(0)\right)$ for some subsequence
$\{t_i\}_{i\in\naturalnneg}$. 
By continuity of the gradient (Assumption~\ref{as:convexity_lipschitz_f0}) and 
that of $G_c$,
\begin{align}\label{eq:limit_y_sequence_2}
\lim_{i\to\infty} y({k_i})
    = \lim_{i\to\infty} \nabla_x f(G_c(\theta_{t_i}),\theta_{t_i})
    = \nabla_x f(G_c(\theta_\omega),\theta_\omega).
\end{align}
Because, $y(t)\to 0$, the left-hand side of~\eqref{eq:limit_y_sequence_2} is 
equal to zero, and thus 
$\nabla_x f(G_c(\theta_\omega),\theta_\omega)=0.$ Since this holds for any 
limit point $\theta_\omega\in \Omega(\theta_0)$, the claim follows.

\textit{(If).} 
We now prove that~\eqref{eq:dynamic_condition} implies 
${\lim _{t \rightarrow \infty} y(t) = 0}$.
It follows from~\eqref{eq:dynamic_condition} that the graph of the mapping 
$z = \sigma(\theta)$ (i.e., $(\sigma(\theta), \theta)$) is a center manifold
for the coupled dynamics~\eqref{eq:copled_system_gradient_feedback}. Hence, 
by Theorem~\ref{thm:local_attractivity_manifold},
such manifold is locally attractive; namely, 
$z(t) \rightarrow \sigma(\theta(t))$ as $t \rightarrow \infty.$
Then, the fulfillment of~\eqref{eq:dynamic_condition_b} guarantees that  
the right-hand side of~\eqref{eq:limit_y_sequence_2}
$y(t) \rightarrow 0.$ The conclusion then follows by iterating the 
\textit{(If)} part of the proof of 
Theorem~\ref{thm:solvability_parameter_feedback}.
\end{proof}

The two conditions in~\eqref{eq:dynamic_condition} fully characterize the 
class of optimization algorithms that achieve exact asymptotic tracking: 
\eqref{eq:controller_equatons_zx_f0} tracks a critical trajectory if 
and only if, for some mapping $\sigma,$ the composite function $G_c \circ \sigma$  zeros the gradient locally (see
\eqref{eq:dynamic_condition_b}), and the controller $F_c(z,y)$ is 
algebraically related to the exosystem $s(\theta)$ as given 
by~\eqref{eq:dynamic_condition_a}. 
Notice that, by Theorem~\ref{thm:solvability_parameter_feedback}, the former 
condition implies that
\begin{align}\label{eq:Gc_composite_sigma}
x(t) = G_c(\sigma(\theta)),
\end{align}
is a parameter-feedback optimization algorithm 
for~\eqref{eq:optimization_objective_f}.
Finally, we note that the existence of an exponentially stable system as 
in~\eqref{eq:exponential_equilibrium} is always guaranteed under 
Assumption~\ref{as:detectability_exosystem}~\cite[Sec.~3]{WL-CB:94}.

The following interpretation follows from our findings.

\begin{remark}[\textbf{\textit{The internal model principle}}]
\label{rem:Minternal_model_principle}
We refer to condition~\eqref{eq:dynamic_condition_a} as the \textit{internal 
model principle of time-varying optimization}, as it encapsulates the 
requirement that an optimization algorithm must incorporate an internal model 
of the exosystem to achieve exact tracking.
Note that the use of a copy of the temporal variability of the optimization 
problem is explicit in the prediction-correction algorithm (see~\eqref{eq:pred_corr_FcGc}).
\QEDB\end{remark}

Theorem~\ref{thm:internal_model_principle} allows us to re-interpret the basic 
gradient-flow algorithm as follows.

\begin{remark}[\textbf{\textit{Internal model-based interpretation of basic gradient flow methods}}]
\label{rem:internal_model_grad_flow}
Recall that the gradient-flow algorithm~\eqref{rem:static_gradient} can be 
viewed as an instance of~\eqref{eq:controller_equatons_zx_f0_b} with 
$F_c(z,y)$ and $G_c(z)$ as in Remark~\ref{rem:special_cases_optimization_alg}. 
By direct substitution into~\eqref{eq:dynamic_condition}, it is evident that this algorithm 
satisfies~\eqref{eq:dynamic_condition_a} with 
$s(\theta) = 0$ and $\sigma(\theta)$ arbitrary. 
Since $s(\theta) = 0$ describes the internal model of a constant signal, by 
Theorem~\ref{thm:internal_model_principle}, these algorithms achieve exact asymptotic tracking 
if and only if $\theta(t)$ is a constant signal \blue{(under 
Assumptions~\ref{as:convexity_lipschitz_f0}--\ref{as:detectability_exosystem})}. This observation is in line 
with the inexact convergence properties of these algorithms typically given for 
these algorithms~\cite{PAA-KK:06,EH:16,GB-JC-JP-ED:21-tcns}.~
\QEDB\end{remark}

Since Theorem~\ref{thm:internal_model_principle} shows that the problem of
exact asymptotic tracking can be reduced to finding an optimization algorithm
that is algebraically equivalent to the exosystem together with a mapping that
zeros the gradient, we conclude the following.

\textit{Answer to Problem~\ref{prob:assumptions}:}
When an algorithm as in~\eqref{eq:controller_equatons_zx_f0} that achieves 
exact asymptotic tracking exists, the minimal knowledge needed to design such an algorithm is given by assumptions (E2) and (O2).
Notice that assumption (O1) is implicitly required to execute the algorithm~\eqref{eq:controller_equatons_zx_f0}.

By comparison with the \textit{Answer to 
Problem~\ref{prob:minimal_knowledge_parameter}}, it follows that relaxing 
the knowledge of $\theta(t)$ (i.e., relaxing (E1)) comes at the cost of 
requiring knowledge of the exosystem (i.e., requiring (E2)).

By Theorem~\ref{thm:internal_model_principle},
the exosystem state $\theta$ and that of the optimization $z$ must be related, 
everywhere in $\Theta_\circ,$ by
\begin{align}\label{eq:change_of_coordinates}
z(t) =\sigma(\theta(t)).
\end{align}
Intuitively, \eqref{eq:change_of_coordinates} is interpreted as the existence 
of a change of coordinates between the state of the exosystem and that of the 
optimization; see Section~\ref{sec:injectivity_sigma} for a discussion on the 
invertibility properties of the function $\sigma(\theta)$.
%


\begin{remark}[\textbf{\textit{Special case with $\sigma$ being the identity operator}}]
\label{rem:sigma_identity}
An important special case is obtained when $\sigma$ is the identity operator 
on $\Theta$; in this case, \eqref{eq:dynamic_condition} 
simplifies to $s(\theta) = F_c(\theta,0)$,
which states that the controller vector field $F_c(z,y)$ must coincide with 
that of the exosystem $s(\theta)$ in $\Omega(\Theta_\circ).$ In this case, 
\eqref{eq:change_of_coordinates} gives $z(t) =\theta(t);$ namely, the 
controller state $z(t)$ and that of the exosystem $\theta(t)$ coincide in 
$\Omega(\Theta_\circ).$
\QEDB\end{remark}

While Theorem~\ref{thm:internal_model_principle} provides a full 
characterization of all gradient-feedback algorithms that achieve tracking, it 
remains to address under what conditions such an algorithm is guaranteed to 
exist.  This question is addressed next.

\begin{theorem}[\textbf{\textit{Existence of gradient-feedback algorithms}}]
\label{thm:solvability_gradient_feedback}
Suppose Assumptions~\ref{as:convexity_lipschitz_f0}--\ref{as:detectability_exosystem} hold.
Then, there exists a gradient-feedback algorithm, such that  $z=z^\star_\circ$
is exponentially stable for~\eqref{eq:exponential_equilibrium}, that solves 
Problem~\ref{prob:grad_feedback} if and only if there exists a mapping 
$H_c : \Theta\to\real^n$ that zeros the gradient on the limit set of~\eqref{eq:exosystem} with respect to its initializations. 
\QEDB\end{theorem}

\begin{proof}
\textit{(Only if)} By Theorem~\ref{thm:internal_model_principle}, there 
exists a mapping $z = \sigma(\theta)$ such that~\eqref{eq:dynamic_condition_b} 
holds. Then, \eqref{eq:gradient_condition} holds immediately by 
letting $H_c(\theta) = G_c(\sigma(\theta))$.

\textit{(If)} We will prove this claim by constructing a gradient-feedback 
algorithm that achieves $y(t) \rightarrow 0$ as $t\to\infty$. 
First, notice that by Assumption~\ref{as:detectability_exosystem}, there 
exists a matrix $L$ such that $S-LQ$ has eigenvalues in $\complex^-$.
Consider the 
algorithm~\eqref{eq:controller_equatons_zx_f0} with $n_c = p$ and\footnote{Note 
that the second argument of the gradient is evaluated at the algorithm state 
$z$ instead of the parameter vector $\theta$ as the latter is unknown.}
\begin{subequations}\label{eq:Fc_proof}
    \begin{align}
    F_c(z,y) &= s(z) + L (y - \nabla_x f (H_c(z), z)), \\
    G_c(z) &= H_c(z),
    \end{align}
\end{subequations}
where $H_c(z)$ is as in~\eqref{eq:gradient_condition}.
The Jacobian of $\dot z(t) = F_c(z(t), y(t))$ with respect to $z$ is given by $S-LQ$ (two terms of the form $LRM$ with opposite 
sign cancel out in forming the Jacobian: one from the gradient term 
$\nabla_x f (H_c(z), z)$ and the other one from $y$);
since $S-LQ$ has eigenvalues in $\complex^-$, $z=z^\star_\circ$ is exponentially stable for~\eqref{eq:exponential_equilibrium}.
The claim thus follows by
Theorem~\ref{thm:internal_model_principle} with $\sigma$ the identity operator on $\Theta$.
\end{proof}

Interestingly, the conditions for the existence of a gradient-feedback 
algorithm and those for a parameter-feedback algorithm (cf. 
Section~\ref{sec:parameter_feedback}) are identical. This should not be surprising: 
on the one hand, a parameter-feedback algorithm assumes access to $\theta(t)$; on the 
other hand, a gradient-feedback algorithm must infer $\theta(t)$ through $y(t)$, but 
(see~\eqref{eq:change_of_coordinates}) the dynamic state of the controller $z(t)$ acts as a 
copy (possibly in different coordinates) of $\theta(t)$. In this sense, both algorithm classes 
rely on essentially  the same information, and it is therefore natural that the conditions for 
their existence coincide.


\subsection{Design of dynamic gradient feedback algorithms}

We now focus on addressing Problem~\ref{prob:grad_feedback}.
A design procedure to construct $F_c(z,y)$ and $G_c(z)$ is presented in 
Algorithm~\ref{alg:grad_feedback_design}, which is derived from the proof of 
Theorem~\ref{thm:solvability_gradient_feedback}. 

The algorithm uses a Luenberger observer to estimate the exosystem state 
$\theta(t)$ (cf. line $4$), and a parameter feedback algorithm is applied 
to the estimated exosystem state to regulate the gradient to zero; precisely, 
$G_c(z)$ is designed following the approach of 
Theorem~\ref{thm:solvability_parameter_feedback} (see line $3$ of 
Algorithm~\ref{alg:grad_feedback_design}). 
Notice that the algorithm uses the particular choice for $\sigma(\theta)$ being 
the identity operator (see Remark~\ref{rem:sigma_identity}).
Note that line $2$ of the algorithm guarantees that the equilibrium 
of~\eqref{eq:exponential_equilibrium} is locally exponentially stable, as 
required by Theorem~\ref{thm:internal_model_principle}, and that the 
existence of such $L$ is guaranteed by 
Assumption~\ref{as:detectability_exosystem}.

\begin{algorithm}
\caption{Gradient-feedback algorithm design}
\label{alg:grad_feedback_design}
\KwData{$s(\theta),$ $\nabla_x f(x,\theta),$ $G_c(\theta)$ satisfying~\eqref{eq:dynamic_condition_b}, Jacobian matrices $Q$ and $S$ in~\eqref{eq:jacobians}}
$n_c \gets p$\;
$L \gets$ any matrix such that $S-LQ$ is Hurwitz\;
$G_c(z) \gets H_c(z)$\;
$F_c(z,y) \gets s(z) + L (y - \nabla_x f (H_c(z), z))$\;
\KwResult{$F_c(z,y)$, $G_c(z)$, and $n_c$ that solve Problem~\ref{prob:grad_feedback}}
\end{algorithm}


\begin{theorem}[\textbf{\textit{Correctness of Algorithm~\ref{alg:grad_feedback_design}}}]
\label{thm:algorithm_regulation}
Suppose Assumptions~\ref{as:convexity_lipschitz_f0}--\ref{as:detectability_exosystem} hold. Then, the control algorithm~\eqref{eq:controller_equatons_zx_f0} designed 
using Algorithm~\ref{alg:grad_feedback_design} asymptotically tracks a 
critical trajectory of~\eqref{eq:optimization_objective_f}.
\end{theorem}

\begin{remark}[\textbf{\textit{Alternative observer choices}}]
Instead of a Luenberger observer, alternative dynamic observers could be 
considered in Line 4 of the algorithm to achieve different asymptotic or 
transient properties of the resulting gradient-feedback optimization algorithm; 
we leave the investigation of alternative state observer algorithms to future works.
\QEDB\end{remark}

We illustrate the algorithm design procedure on a quadratic problem in the 
following example.

\begin{example}
\label{ex:quadratic_gradient_feedback}
Consider the quadratic problem studied in 
Example~\ref{ex:quadratic_cost}, and assume that the exosystem follows 
the linear model $\dot \theta = S \theta$ for some matrix 
$S \in \real^{p\times p}$ as in Assumption~\ref{as:exosystem}.
Further, suppose that $Q$ is such that the unobservable subspace
$\mc N = \cap_{i \geq 0} \ker(QS^i)$ is the empty set $\mc N = \emptyset,$ so 
that Assumption~\ref{as:detectability_exosystem} is satisfied; notice that this 
assumption holds, for example, when $n=p$ and $Q$ is invertible. 
According to Theorem~\ref{thm:solvability_gradient_feedback}, an
optimization algorithm given by
\begin{align}\label{eq:controller_linear}
    \dot z = A_c z + B_c y, && x = G_c z,
    && y = Rx + Q \theta,
\end{align}
where $A \in \real^{n_c \times n_c},$ $B_c \in \real^{n_c \times n},$
$G_c \in \real^{n \times n_c},$ achieves asymptotic tracking if and 
only if there exists a linear transformation 
$\Sigma \in \real^{n_c \times p}$ such that:
\begin{subequations}\label{eq:dynamic_condition_linear}
\begin{align} 
\Sigma S &= A_c \Sigma, \label{eq:dynamic_condition_linear_a}\\
0 &= R G_c \Sigma + Q. \label{eq:dynamic_condition_linear_b}
\end{align}
\end{subequations}


Next, we apply Algorithm~\ref{alg:grad_feedback_design} to design an algorithm 
that solves this problem. The mapping $H_c(\theta) = - R^\dagger Q \theta$ zeros the gradient (see Example~\ref{ex:quadratic_cost}). Following Algorithm~\ref{alg:grad_feedback_design}, we set $n_c = p$, choose matrix $L$ such that $S-LQ$ is Hurwitz, define $G_c(z) = H_c(z) = -R^\dagger Q z$, and then select $A_c$ and $B_c$ such that $F_c(z,y) = A_c z + B_c y$ satisfies:
\begin{align*}
F_c(z,y) &= s(z) + L (y - \nabla_x f (H_c(z), z)),\\
&= S z + L(y - (R H_c(z) + Qz)),\\
&= S z + L y,
\end{align*}
which gives $A_c = S$ and $B_c = L.$ 
In summary, a dynamic gradient-feedback algorithm is~\eqref{eq:controller_linear} with $A_c = S$, $B_c = L$, and $G_c = -R^\pinv Q$,
where $L$ is any matrix such that $S-LQ$ is Hurwitz.
With this choice, conditions~\eqref{eq:dynamic_condition_linear} are satisfied 
with $\Sigma = I$ (see Remark~\ref{rem:sigma_identity}). 
Finally, the autonomous model \eqref{eq:exponential_equilibrium} 
becomes $\dot z(t) = (S - LQ) z(t)$,
which is asymptotically stable, as required by 
Theorem~\ref{thm:internal_model_principle}.
\QEDB\end{example}

\subsection{The optimization algorithm as a copy of the exosystem}
\label{sec:injectivity_sigma}

As shown in the previous section, the state of the optimization algorithm and 
that of the exosystem must be related, at all points of the limit set, by 
\eqref{eq:change_of_coordinates}. In this section, we show that
$\sigma(\theta)$ is injective, and therefore defines a well-posed change of 
coordinates. We begin by establishing the following result.

\begin{proposition}[\textbf{\textit{Injectivity of $\sigma(\theta)$ from Algorithm~\ref{alg:grad_feedback_design}}}]
\label{prop:alg_sol_is_injective}
Let the assumptions of Theorem~\ref{thm:solvability_gradient_feedback} hold and 
let $F_c(z,y)$ and $G_c(z)$ be obtained by 
Algorithm~\ref{alg:grad_feedback_design}. Then, there exists an injective map 
$\sigma(\theta)$ such that~\eqref{eq:change_of_coordinates} holds. 
\QEDB\end{proposition}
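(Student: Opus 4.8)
The plan is to produce $\sigma$ explicitly rather than to invoke an abstract existence argument: for the algorithm returned by Algorithm~\ref{alg:grad_feedback_design} I claim that $\sigma$ may be taken to be the identity on $\Theta$, which is of course injective. Recall that the design places the controller state $z$ in the same space as the exosystem state $\theta$ and sets $G_c(z)=H_c(z)$ and $F_c(z,y)=s(z)+L\bigl(y-\nabla_x f(H_c(z),z)\bigr)$, where $L$ renders $S-LQ$ Hurwitz. Thus the candidate $\sigma(\theta)=\theta$ is well defined, and since $\sigma(0)=0$ and $x_\circ^\star=G_c(0)=H_c(0)$, it is consistent with $\sigma(0)=z_\circ^\star$ (so $z_\circ^\star=0$).

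First I would verify that the diagonal $\{(z,\theta):z=\theta\}$ is an \emph{exact} invariant manifold of the coupled dynamics~\eqref{eq:copled_system_gradient_feedback}. Along $z=\theta$, the local gradient-zeroing property of $H_c$ (guaranteed under the hypotheses by Theorem~\ref{thm:solvability_gradient_feedback}) yields both $\nabla_x f(H_c(z),z)=\nabla_x f(H_c(\theta),\theta)=0$ and $y=\nabla_x f(H_c(z),\theta)=0$, so the correction term $L\bigl(y-\nabla_x f(H_c(z),z)\bigr)$ vanishes and $\dot z=s(z)=s(\theta)=\dot\theta$. Hence $z(0)=\theta(0)$ forces $z(t)=\theta(t)$ for all $t$, which is precisely the statement that $\sigma(\theta)=\theta$ solves~\eqref{eq:dynamic_condition}, as already recorded in the proof of Theorem~\ref{thm:solvability_gradient_feedback}.

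Next I would show that this invariant manifold is the attractive one governing~\eqref{eq:change_of_coordinates}. Setting $e=z-\theta$, the error dynamics linearize at the origin to $\dot e=(S-LQ)e+\text{h.o.t.}$: differentiating $\nabla_x f(H_c(z),\theta)$ and $\nabla_x f(H_c(z),z)$ with respect to $z$ produces two identical first-order contributions $LRM$ of opposite sign that cancel (the same cancellation already noted when computing the Jacobian in the proof of Theorem~\ref{thm:solvability_gradient_feedback}), leaving $S-LQ$, which is Hurwitz by construction. Consequently $e=0$ is exponentially stable, so by Theorem~\ref{thm:local_attractivity_manifold} the diagonal is locally attractive and $z(t)\to\theta(t)$; this is exactly~\eqref{eq:change_of_coordinates} with $\sigma=\id$, and injectivity is then immediate.

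The one point that merits care — and the step I expect to be the main obstacle — is reconciling this explicit diagonal with the $\sigma$ furnished abstractly by the center-manifold construction of Theorem~\ref{thm:internal_model_principle}. Since center manifolds are generically non-unique, the argument hinges on emphasizing that the diagonal is not merely \emph{a} center manifold but an exact invariant manifold that is exponentially attractive; this is what pins down the manifold the trajectories actually approach and fixes $\sigma=\id$ unambiguously. Beyond this conceptual matching, no hard computation is required.
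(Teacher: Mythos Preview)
Your proposal is correct and follows essentially the same approach as the paper: the paper's proof is a one-liner observing that $F_c$ and $G_c$ from Algorithm~\ref{alg:grad_feedback_design} satisfy~\eqref{eq:dynamic_condition} with $\sigma$ the identity on $\Theta$ (this was already checked in the proof of Theorem~\ref{thm:solvability_gradient_feedback}), whence injectivity is immediate. Your additional verification of invariance and attractiveness of the diagonal, and your discussion of center-manifold non-uniqueness, are accurate but not needed for the statement as phrased, since it only asks for \emph{existence} of an injective $\sigma$ satisfying~\eqref{eq:change_of_coordinates}.
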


\begin{proof}
The claim follows by noting that $F_c(z,y)$ and $G_c(z)$ obtained by 
Algorithm~\ref{alg:grad_feedback_design} satisfy~\eqref{eq:dynamic_condition}
with $\sigma$ the identity. 
\end{proof}

It follows from Proposition~\ref{prop:alg_sol_is_injective} that, for 
any optimization method returned by Algorithm~\ref{alg:grad_feedback_design}, 
there exists an injective mapping that, locally, maps the state of the 
exosystem into the state of the optimization algorithm. 
This property is not exclusive to Algorithm~\ref{alg:grad_feedback_design}; in 
fact, any algorithm that achieves exact asymptotic tracking possesses the same 
characteristic, as demonstrated next.

\begin{proposition}[\textbf{\textit{Injectivity of any $\sigma(\theta)$}}]
\label{prop:sigma_injective}
Let the assumptions of Theorem~\ref{thm:solvability_gradient_feedback} hold, 
and assume that $s(\theta)$ is of class $C^\infty$. Then, for any algorithm of 
the form~\eqref{eq:controller_equatons_zx_f0} that achieves asymptotic 
tracking, there exists a neighborhood $\Theta_\circ \subset \Theta$ of the 
origin such that $\sigma(\theta)$ is injective on $\Theta_\circ$.~\QEDB
\end{proposition}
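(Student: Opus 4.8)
The plan is to reduce injectivity of the nonlinear map $\sigma$ to injectivity of its Jacobian at the origin, and then to extract the required rank condition from the linearization of the center-manifold conditions~\eqref{eq:dynamic_condition} together with detectability. Since $F_c$ and $G_c$ achieve asymptotic tracking, Theorem~\ref{thm:internal_model_principle} furnishes a $C^2$ mapping $\sigma$ with $\sigma(0)=z_\circ^\star$ satisfying~\eqref{eq:dynamic_condition} on a neighborhood $\Theta_\circ$ of the origin. Writing $\Sigma := \left.\frac{\partial \sigma}{\partial \theta}\right|_{\theta=0} \in \real^{n_c\times p}$, the goal becomes to show that $\Sigma$ has full column rank. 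Local injectivity of $\sigma$ then follows from the standard immersion theorem: after composing with a coordinate projection $\pi$ onto a set of $p$ rows for which $\pi\Sigma$ is invertible, the inverse function theorem makes $\pi\circ\sigma$ a local diffeomorphism, hence locally injective, so $\sigma$ itself is locally injective near the origin.

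To obtain the rank condition, I would differentiate both identities in~\eqref{eq:dynamic_condition} at $\theta=0$. Using $s(0)=0$ and $F_c(z_\circ^\star,0)=0$, differentiation of~\eqref{eq:dynamic_condition_a} yields the Sylvester-type relation $\Sigma S = A_c\Sigma$, where $A_c$ and $S$ are as defined in the proof of Theorem~\ref{thm:internal_model_principle} and in~\eqref{eq:jacobians}; differentiation of~\eqref{eq:dynamic_condition_b} yields $RM\Sigma + Q = 0$, with $R$, $M$, $Q$ as defined there. These are exactly the nonlinear counterparts of the linear relations~\eqref{eq:dynamic_condition_linear} appearing in Example~\ref{ex:quadratic_gradient_feedback}.

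The key step is to show $\ker\Sigma = \{0\}$ through detectability. Set $V := \ker\Sigma$. From $\Sigma S = A_c\Sigma$, any $v\in V$ satisfies $\Sigma(Sv) = A_c\Sigma v = 0$, so $V$ is $S$-invariant; from $RM\Sigma = -Q$, we get $Qv = -RM\Sigma v = 0$, so $V\subseteq\ker Q$. Iterating gives $QS^k v = 0$ for all $k\ge 0$, i.e. $V$ lies in the unobservable subspace of the pair $(Q,S)$. Now I would invoke the eigenvalue structure: by Assumption~\ref{as:stability_exosystem} every eigenvalue of $S$ lies on the imaginary axis, so the restriction of $S$ to its unobservable subspace has no eigenvalue in $\complex^-$; detectability (Assumption~\ref{as:detectability_exosystem}) requires precisely that this restriction have all eigenvalues in $\complex^-$. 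The only way to reconcile the two is that the unobservable subspace, and hence $V$, is trivial. Thus $\Sigma$ has full column rank and $\sigma$ is injective on a neighborhood of the origin.

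I expect the main subtlety to be this last reduction from detectability to triviality of the unobservable subspace: one must carefully combine Assumption~\ref{as:stability_exosystem} (all modes of $S$ are on the imaginary axis, hence there are no strictly stable modes to absorb) with the definition of detectability to conclude that there can be no undetectable modes whatsoever, so that the invariant-subspace argument indeed forces $V=\{0\}$. A secondary technical point is justifying the local injectivity-from-injective-derivative step in the possibly non-square case $n_c>p$, which the immersion theorem handles once $\Sigma$ is shown to have full column rank (this also confirms the consistency requirement $n_c\ge p$, since otherwise $\Sigma$ could not have trivial kernel). The differentiation in the first two paragraphs only uses $\sigma\in C^1$, which is guaranteed since $\sigma$ is $C^2$ by Theorem~\ref{thm:internal_model_principle}.
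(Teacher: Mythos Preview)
Your proof is correct and takes a genuinely different route from the paper's. The paper argues by contradiction at the nonlinear level: supposing there is a nonzero $\theta'\in\Theta_\circ$ with $\sigma(\theta')=z_\circ^\star$, it uses~\eqref{eq:dynamic_condition} at $\theta'$ together with iterated Lie derivatives of $\nabla_x f$ along $s$ to argue that the nonlinear observability rank condition of~\cite{RH-AK:77} fails at $\theta'$, contradicting detectability. You instead linearize~\eqref{eq:dynamic_condition} at the origin to obtain the Sylvester-type relations $\Sigma S=A_c\Sigma$ and $RM\Sigma+Q=0$, observe that $\ker\Sigma$ is an $S$-invariant subspace of $\ker Q$, and then combine detectability of $(Q,S)$ with the spectral constraint $\spec(S)\subset i\real$ implied by Assumption~\ref{as:stability_exosystem} (which upgrades detectability to observability) to force $\ker\Sigma=\{0\}$; local injectivity then follows from the immersion theorem. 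Your argument is more elementary, uses precisely the \emph{linear} detectability hypothesis actually stated in Assumption~\ref{as:detectability_exosystem}, and makes transparent why the conclusion is inherently local. The paper's approach, by contrast, aims at a pointwise statement at $\theta'\neq 0$ and leans on the $C^\infty$ hypothesis on $s$ for the Lie-derivative machinery; its potential payoff is access to nonlinear observability information away from the origin, though that extra strength is not needed for the claim as stated, and the argument as written only rules out collisions of the form $\sigma(\theta')=\sigma(0)$ rather than general failures of injectivity.
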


\begin{proof}
By contradiction, assume that $F_c(z,y)$ and $G_c(z)$ 
satisfy~\eqref{eq:dynamic_condition}, but $\sigma(\theta)$ is not injective at 
the origin; namely, there exists nonzero $\theta'\in\Theta_\circ$ such that 
$\sigma(\theta')=z_0^\star.$ Then from~\eqref{eq:dynamic_condition_b},
$0 = \nabla_x f(G_c(\sigma(\theta')), \theta')$.
Moreover, from Theorem~\ref{thm:internal_model_principle},
\begin{align}\label{eq:aux_grad}
    \frac{\partial \sigma(\theta')}{\partial \theta} s(\theta') &= F_c(\sigma(\theta'),0) = 0.
\end{align}
For $C^\infty$ vector fields $h_1(x)$ and $h_2(x)$, we let $L_{h_1}(h_2)(x) = \frac{\partial h_2(x)}{\partial x} h_1(x).$ 
By application of \eqref{eq:aux_grad}, we have
\begin{align*}
0 = L_s(\nabla_x f) (x,\theta') = L_s( L_s(\nabla_x f))(x,\theta') = \dots . 
\end{align*}
Hence, the matrix of Lie derivatives of the measurable signal, which, 
intuitively, characterizes the information content of the gradient signal:
\begin{align*}
\begin{bmatrix}
L_s(\nabla_x f) (x,\theta)\\
L_s(L_s(\nabla_x f)) (x,\theta)\\
\vdots 
\end{bmatrix},
\end{align*}
is not invertible at $\theta=\theta'.$ By~\cite[Thm.~3.13]{RH-AK:77},
the system~\eqref{eq:exosystem} is not weakly observable, thus 
violating Assumption~\ref{as:detectability_exosystem}.
\end{proof}


\section{Fidelity of the internal model vs accuracy}
\label{sec:fidelity_model}

In Section~\ref{sec:gradient_feedback}, we showed that a necessary condition for 
exact asymptotic tracking is that the algorithm incorporates an internal model 
of the temporal variability of the problem.
Departing from this finding, in this section we tackle the following question:
\textit{how will the algorithm perform when the temporal variability of the problem is 
known only approximately?}
For illustration purposes, we focus on the quadratic 
problem~\eqref{eq:quadratic_cost_example} (see  
Examples~\ref{ex:quadratic_cost} and 
\ref{ex:quadratic_gradient_feedback}).
Suppose that an imprecise internal model is available; that is, there exists 
$\Delta \in \real^{n_c \times p}$ such that 
\eqref{eq:dynamic_condition_linear} is modified~to\footnote{
To explicitly model uncertainty in the matrix $S,$ equation \eqref{eq:dynamic_condition_linear_approx} could be rewritten as
$\Sigma (S+\Delta_s) = A_c \Sigma.$ By comparison, the two approaches are 
equivalent with the substitution $\Delta = \Sigma  \Delta_s.$
}:
\begin{equation}\label{eq:dynamic_condition_linear_approx}
    \Sigma S + \Delta = A_c \Sigma \qquad\text{and}\qquad
    0 = R G_c \Sigma + Q.
\end{equation}

To analyze the asymptotic behavior of $y(t),$ it is useful to define the auxiliary 
variable $\tilde z(t) = z(t) - \Sigma \theta(t).$ 
Using~\eqref{eq:dynamic_condition_linear_approx} 
and~\eqref{eq:controller_linear}, the dynamics of $\tilde z$ follow the 
model:
\begin{align}\label{eq:z_tilde}
    \dot{\tilde z} = (A_c + B_cRG_c) \tilde z + \Delta \theta.
\end{align}
The corresponding gradient signal in terms of $\tilde z$ is $y = R G_c \tilde z$.
Assuming that $(A_c +B_cRG_c)$ is Hurwitz stable (see 
Theorem~\ref{thm:internal_model_principle}), the Final Value Theorem 
gives:
\begin{align}\label{eq:final_value_thm}
y_\infty &:= \lim_{t \rightarrow \infty} y(t)  \\
&= \lim_{s \rightarrow 0} s RG_c(sI - A_c -B_cRG_c)^\inv \Delta (sI-S)^\inv \theta(0). 
\nonumber
\end{align}
As a first illustrative scenario, suppose $S=0;$ namely, that the 
exosystem states are constants at all times. Notice that, since $S=0,$ 
from~\eqref{eq:dynamic_condition_linear_a}, $A_c$ incorporates an 
internal model of $S$ if and only if $A_c \Sigma=0;$ precisely, 
if and only if the dimension of the kernel of $A_c$ is at least $p$. In this case, assuming that the perturbed $A_c$ is such that
$(A_c +B_cRG_c)$ remains Hurwitz stable, from \eqref{eq:final_value_thm}, 
we have $y_\infty  = -RG_c(A_c +B_cRG_c)^\inv \Delta \theta(0)$.
As expected, when $\Delta =0,$ $A_c$ incorporates an exact internal 
model, and $y_\infty=0$ for any $\theta(0).$ 
In contrast, when $\Delta \neq 0,$
\begin{align*}
\norm{y_\infty} \leq  \norm{RG_c(A_c +B_cRG_c)^\inv}\norm{\theta(0)}\norm{\Delta}, 
\end{align*}
namely, bounded errors in $\Delta$ result in a bounded $y_\infty.$

Interestingly, analogous continuity properties may not hold when the 
trajectories of~\eqref{eq:exosystem} are not bounded (thus violating 
Assumption~\ref{as:exosystem}).
Consider an instance 
of~\eqref{eq:quadratic_cost_example} with $n=p=2,$  $R=Q=I,$  and $S = \left[\begin{smallmatrix} 0 & 1 \\ 0 & 0 \end{smallmatrix}\right]$. The solutions of~\eqref{eq:exosystem}~with this
exosystem are a linear combination of modes $e^{0t}$ and~$te^{0t},$ and thus grow unbounded at a linear rate.
Suppose algorithm~\eqref{eq:controller_linear} is used with 
$G_c=I$, $B_c = I,$ and $A_c = \left[\begin{smallmatrix}
    -\epsilon_1 & 1 \\ 0 & -\epsilon_2
\end{smallmatrix}\right]$,
where $\epsilon_1,\epsilon_2\in[0,1)$. 
When $\epsilon_1=\epsilon_2=0,$ 
\eqref{eq:dynamic_condition_linear_approx} holds with $\Sigma=-I$ and
$\Delta=0$, in which case $y_\infty =0$ and exact asymptotic tracking is 
achieved. 
On the other hand, when $\epsilon_1, \epsilon_2 \neq 0,$ 
\eqref{eq:dynamic_condition_linear_approx} holds with $\Delta = \left[\begin{smallmatrix}
    \epsilon_1 & 0 \\ 0 & \epsilon_2
\end{smallmatrix}\right]$
and, from~\eqref{eq:final_value_thm}, we obtain:
\begin{align*}
y_\infty = \lim_{s \rightarrow 0} 
\begin{bmatrix}
\frac{\epsilon_1}{s-\alpha_1}     & \frac{\epsilon_1}{s(s-\alpha_1)} + \frac{\epsilon_2}{(s-\alpha_1)(s-\alpha_2)}\\
0 & \frac{\epsilon_2}{s-\alpha_2}
\end{bmatrix} \bmat{\theta_1(0) \\ \theta_2(0)},
\end{align*}
where $\alpha_i = 1-\epsilon_i$, $i \in \{1, 2\}.$ 
Since $\epsilon_1 \neq 0$, and provided that $\theta_2(0)\neq 0$, the 
above identity implies that $\Vert y(t) \Vert \rightarrow \infty$ as 
$t \rightarrow \infty$, i.e., the asymptotic tracking error grows unbounded. 

This example illustrates that the tracking accuracy depends on the fidelity of 
the internal model but also the asymptotic behavior of the exosystem.

\section{Extensions}\label{sec:extensions}
In this section, we discuss possible extensions to constrained optimization problems and to discrete-time algorithms. 

\subsection{Constrained problems}

Our results can be extended to constrained time-varying 
optimization. Consider the equality-constrained problem
\begin{alignat*}{2}
    &\text{minimize}\quad && f(x,\theta(t)) \\
    &\text{subject to}\quad && h_i(x,\theta(t)) = 0, \quad i=1,\ldots,m,
\end{alignat*}
where the constraint functions $h_i(x,\theta(t))$ depend on the parameter vector. The associated Lagrangian function is
\[
    L(x,\lambda,\theta(t)) = f(x,\theta(t)) + \sum_{i=1}^m \lambda_i h_i(x,\theta(t)),
\]
where $\lambda_i$ is the Lagrange multiplier associated with the $i\textsuperscript{th}$ equality constraint. A pair $(x,\lambda)\in\real^n\times\real^m$ is said to be a saddle-point of the Lagrangian if, for all $(\bar x,\bar\lambda)\in\real^n\times\real^m$,
\[
    L(x,\bar\lambda,\theta(t)) \leq L(x,\lambda,\theta(t)) \leq L(\bar x,\lambda,\theta(t)). 
\]
For any such 
saddle-point, if strong duality holds, $x$ is primal optimal, $\lambda$ is dual 
optimal, and the optimal duality gap is zero. Moreover, the gradient of the Lagrangian (assuming it exists) is zero at any saddle-point. It follows from the derivations in the previous sections that the 
gradient-feedback and parameter-feedback algorithms can be directly applied to 
seek a stationary point of the Lagrangian function by replacing 
(in~\eqref{eq:optimization_objective_f}) the variable $x$ with the extended 
decision variable $\tilde x = (x, \lambda)$ and by letting 
$f(\tilde x, \theta) = L(x, \lambda, \theta).$
Notice that, if the critical point computed 
by~\eqref{eq:controller_equatons_zx_f0_a} is also a saddle-point and strong 
duality holds, then it is also a solution to the equality-constrained problem. When strong duality does 
not hold, a saddle-point may not correspond to an optimizer~\cite[Ch.~5]{boyd2004}.


\subsection{Discrete-time algorithms}
\label{sec:discrete-time}
In practice, optimization algorithms are often implemented on digital machines 
and thus require discretization of~\eqref{eq:controller_equatons_zx_f0}. 
It is worth noting that this scenario corresponds to designing a discrete-time 
algorithm to track a continuous-time optimizer 
(cf.~\eqref{eq:optimization_objective_f} and \eqref{eq:critical_trajectory}).
In these cases, a possible solution is to pursue a \textit{design by 
emulation} \cite{DN-AT-DC:09}, which consists in converting the continuous-time
algorithm~\eqref{eq:controller_equatons_zx_f0} into a 
discrete-time one by approximation (that is, by approximating the 
time-derivative by a discrete difference using, e.g., Backward/Forward 
Difference, Tustin, ZOH methods, etc.). Then, the continuous-time 
guarantees given here can be translated into discrete-time ones using 
standard tools from emulation~\cite{DN-AT-DC:09}, provided that the sampling 
time is sufficiently small. In these cases, 
exact tracking established above will hold at the discrete time instants, 
while tracking will be inexact in-between sampling times and the error can be 
bounded in terms of the sampling frequency using the explicit bounds 
in~\cite{DN-AT-DC:09}.

An alternative approach that could be pursued is that of discretizing the 
temporal variability $\theta(t)$, thus leading to a discrete-time optimization 
problem as in~\eqref{eq:discrete_optimization}.
For this alternative formulation, the critical trajectories are instead 
discrete signals and, in this case, it would be natural to seek a discrete 
algorithm, in place of~\eqref{eq:controller_equatons_zx_f0}, and pursue an 
analysis in  discrete time. Such a design is beyond the scope of this work and 
is left as the focus of future works.

\section{Simulation results}
\label{sec:simulations}

In this section, we illustrate our results and optimization design method
through a set of numerical simulations. For implementation on digital 
machines, all algorithms have been discretized by using the explicit 
Runge-Kutta-Fehlberg method of order $4(5)$ with adaptive stepsize~\cite{PB-FL:96}.


\begin{figure}[t]
\centering \subfigure{\includegraphics[width=\columnwidth]{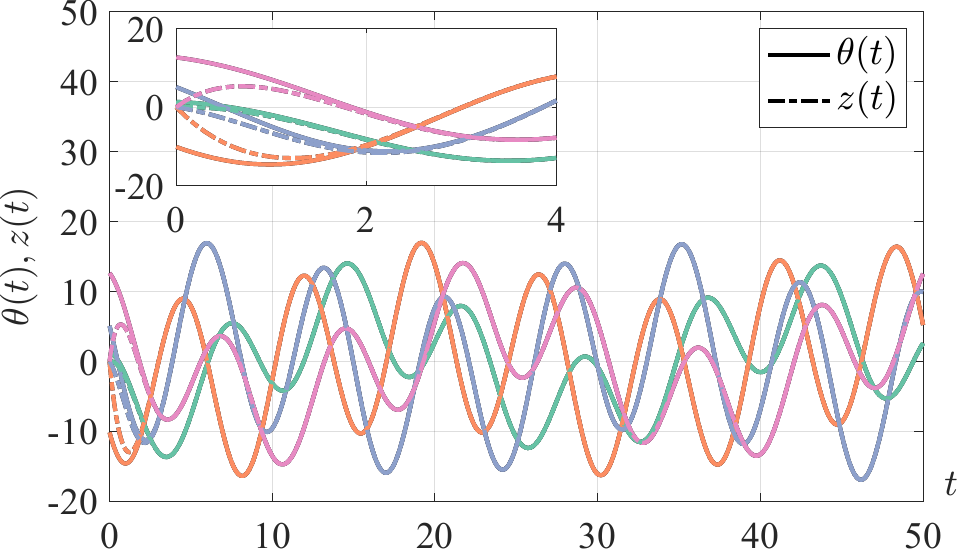}}
\centering \subfigure{\includegraphics[width=\columnwidth]{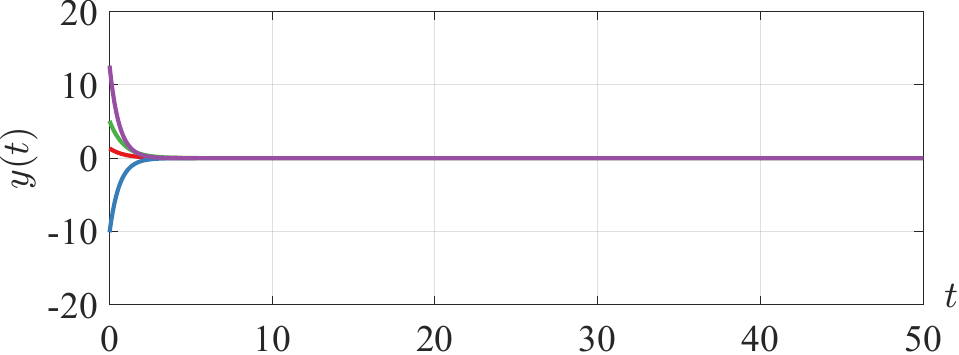}}
\centering \subfigure{\includegraphics[width=\columnwidth]{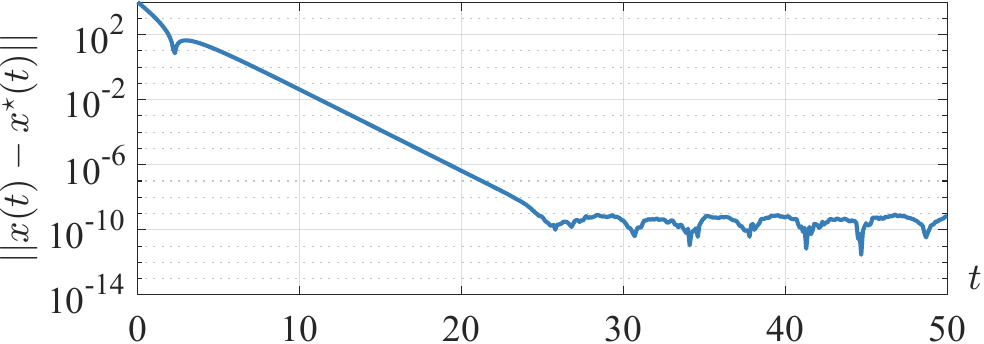}} 
\caption{%
Simulation results illustrating the performance of an optimization algorithm 
synthesized using Algorithm~\ref{alg:grad_feedback_design} for the quadratic 
instance~\eqref{eq:quadratic_cost_example} 
of~\eqref{eq:optimization_objective_f}. See Example~\ref{ex:quadratic_cost} and 
Section~\ref{sec:quadratic_cost_simulations} for a discussion. (Top) 
Illustration of the temporal variability of the parameter $\theta(t)$ and 
of $z(t).$ (Second from top) $z(t)$ is an estimator for $\theta(t),$ and thus 
$z(t) \rightarrow \theta(t)$ as $t \rightarrow \infty.$ (Third from top) The 
proposed control algorithm is successful in regulating the gradient feedback 
signal $y(t)$ to zero asymptotically. (Bottom) Illustration that 
$x(t) \rightarrow x^\star(t)$ as $t \rightarrow \infty.$
}
\vspace{-.5cm}
\label{fig:quadratic_simulation}
\end{figure}

\subsection{Optimization design for quadratic costs}
\label{sec:quadratic_cost_simulations}
We begin by numerically investigating the quadratic instance 
of~\eqref{eq:optimization_objective_f} with linear temporal variability, discussed previously in Examples~\ref{ex:quadratic_cost} and \ref{ex:quadratic_gradient_feedback}. With dimensions $n=p=4$, we chose the matrix $R\in\symmetric^4$ with random entries such that its eigenvalues are uniformly distributed in the open real interval $(0,1)$, and we set $Q = I \in\real^{4\times 4}$. We let the exosystem be $\dot\theta(t) = S\theta(t)$, where $S \in \real^{4 \times 4}$ is given by $S = \tilde S-\tilde S^\tsp$, and $\tilde S$ is a matrix with random entries uniformly distributed in the open real interval $(0,1)$. Notice that this choice ensures that the eigenvalues of $S$ are on the imaginary axis. We chose
$H_c(\theta)=H_c \theta$ with $H_c = -R^\inv Q,$ which 
ensures~\eqref{eq:gradient_condition} holds (see 
Example~\ref{ex:quadratic_cost}). We applied 
Algorithm~\ref{alg:grad_feedback_design}, choosing $L$ such the eigenvalues of 
$S-LQ$ are uniformly distributed in the real interval $(-2,-1)$,
as described in Example~\ref{ex:quadratic_gradient_feedback}.
Simulation results are presented in Fig.~\ref{fig:quadratic_simulation}. 
The top plot illustrates the 
time-varying signal $\theta(t)$ and the state of the optimization algorithm 
$z(t)$; the figure illustrates that $z(t)$ converges to $\theta(t)$ 
asymptotically (see discussion immediately after 
Theorem~\ref{thm:solvability_gradient_feedback}).
The middle plot illustrates the gradient signal, validating 
the claim that the algorithm converges to a critical point (where the gradient 
is zero). Finally, the bottom plot illustrates, in log scale, the error 
between the optimization state $x(t)$  and the optimizer $ x^\star(t)$; this 
figure shows that convergence to the optimizer is exponential, up to numerical 
precision.


\subsection{Nonlinear example}\label{sec:nonlinear-example}

We now illustrate our algorithm design on a nonlinear time-varying optimization problem from~\cite{AS-AM-AK-GL-AR:16,RR-AM-UV:22}, and we compare the results with several prediction-correction algorithm variants. Consider minimizing the function:
\[
    f(x,\theta) = \tfrac{1}{2} (x-\theta)^2 + \kappa \log(1+\exp(\mu x)),
\]
with parameters $\kappa = 7.5$ and $\mu = 1.5$, where the time-varying parameter $\theta(t)$ satisfies the exosystem dynamics $\dot\theta = S\theta$ where $S = \left[\begin{smallmatrix} 0 & 1 \\ 1 & 0\end{smallmatrix}\right]$. We compare the error $\|x(t) - x^\star(t)\|$ over time for several algorithms in Fig.~\ref{fig:example-nonlinear}. The objective function satisfies the conditions in Theorem~\ref{thm:suff_cond_parameter_feedback_global}; for our simulations, we computed numerically a mapping that zeros the gradient locally. In Fig.~\ref{fig:example-nonlinear}(a), we show the error for our algorithm with observer pole locations $(-\eta,-2\eta)$ for $\eta\in\{1,5,20\}$ (shown in dark to light blue) along with the corresponding rate $e^{-\eta t}$ (dashed black). For comparison, we also show the error of the prediction-correction (PC) algorithm~\eqref{eq:pred_correction} along with two of its finite-time (FT) variants~\cite[Equations (9) and (20)]{RR-AM-UV:22} using the parameter values in \cite[Section VI.A]{RR-AM-UV:22}.


Instead of computing $H_c(\theta)$ numerically, we can use its linear approximation in~\eqref{eq:Hc-linear}. Doing so yields the results in Fig.~\ref{fig:example-nonlinear}(b) for various size exosystem initial conditions $\theta(0) = \zeta\theta_0$ for $\zeta\in\{1,0.1,0.01\}$. The linear approximation does not zero the gradient exactly, which results in a finite steady-state error whose size depends on the size of the limit set of the exosystem (which corresponds to the size of the initial conditions since the exosystem dynamics are sinusoidal).

\begin{figure}
    \centering
    \subfigure[Error over time for the proposed dynamic gradient-feedback (DGF) algorithm with observer pole locations $(-\eta,-2\eta)$ for $\eta\in\{1,5,20\}$ (dark to light blue) along with the prediction-correction (PC) algorithm and two finite-time (FT) variants.]{\includegraphics[width=0.9\columnwidth]{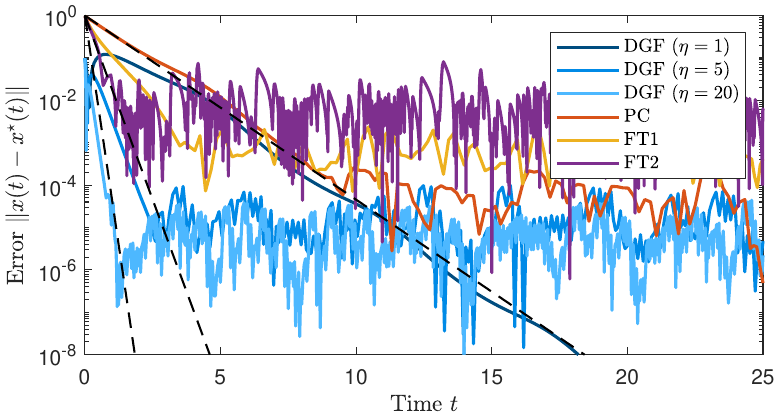}} \\
    
    \subfigure[Error over time for the proposed dynamic gradient-feedback algorithm using the linear approximation to $H_c(\theta)$ in~\eqref{eq:Hc-linear} with different size initial conditions $\theta(0) = \zeta\theta_0$ for $\zeta\in\{1,0.1,0.01\}$. When the initial conditions are small, the exosystem state stays near the equilibrium, in which case $H_c$ approximately zeros the gradient on the limit set of the exosystem.]{\includegraphics[width=0.9\columnwidth]{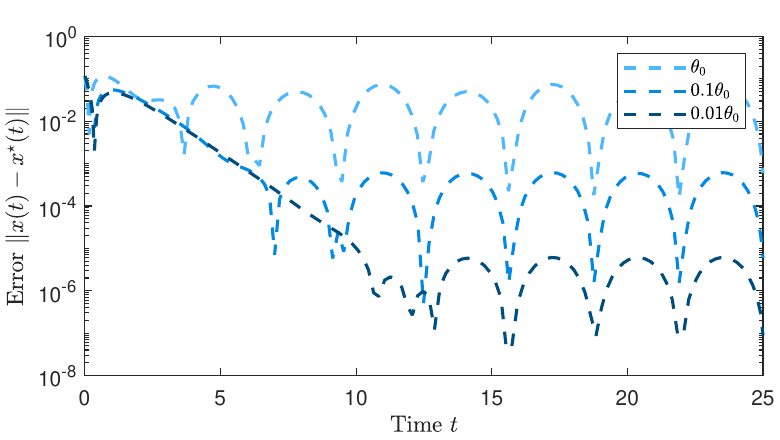}} \\
    \caption{Dynamic gradient-feedback algorithm applied to the nonlinear time-varying optimization problem from Section~\ref{sec:nonlinear-example}.}
    \label{fig:example-nonlinear}
\end{figure}

\subsection{Application to solve the dynamic traffic assignment problem in transportation}
\label{sec:dynamic_traffic_assignment}

\begin{figure}[t]
\centering
\subfigure[]{\includegraphics[width=.49\columnwidth]{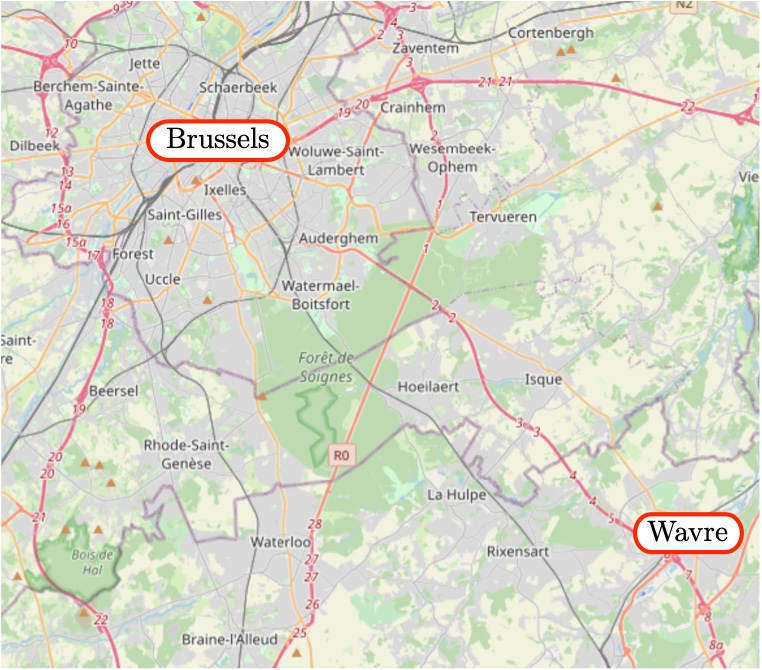}} \hfill
\subfigure[]{\includegraphics[width=.49\columnwidth]{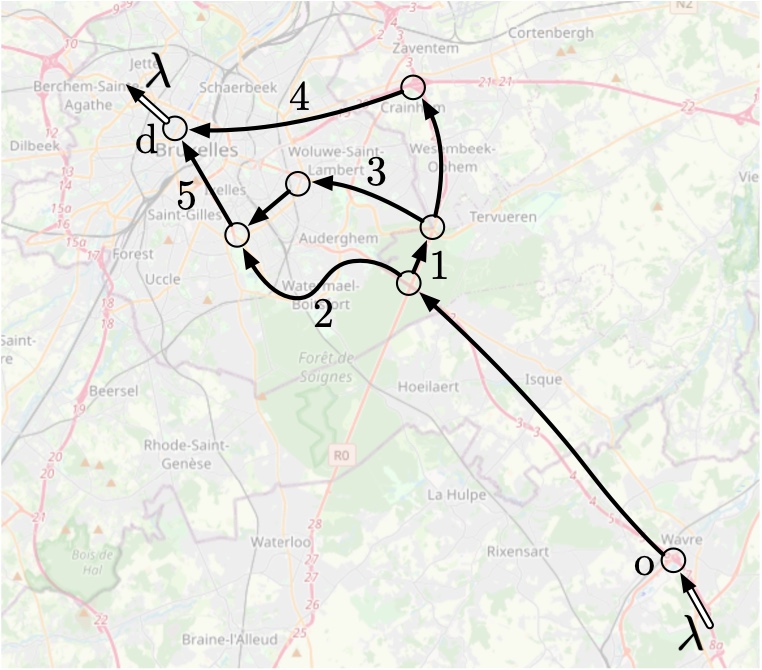}} \\
\caption{(Left) Areal view of the highway system between the cities of Wavre 
and Brussels, Belgium. (Right) Graph utilized to model the portion of traffic 
network of interest.}
\label{fig:traffic_network_brussels}
\vspace{-.5cm}
\end{figure}

\begin{figure}[t]
\centering \subfigure{\includegraphics[width=\columnwidth]{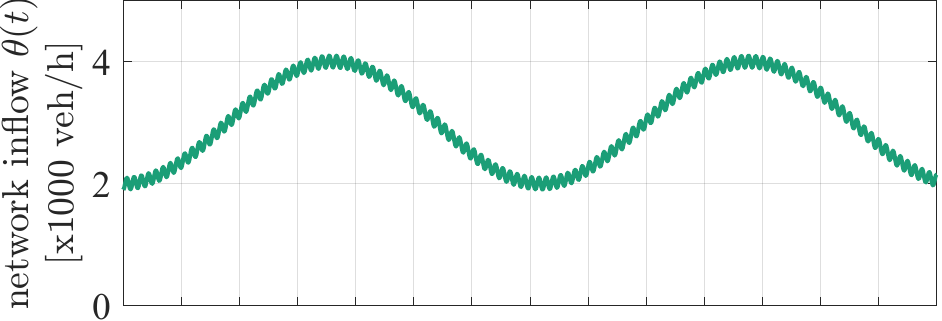}}
\centering \subfigure{\includegraphics[width=\columnwidth]{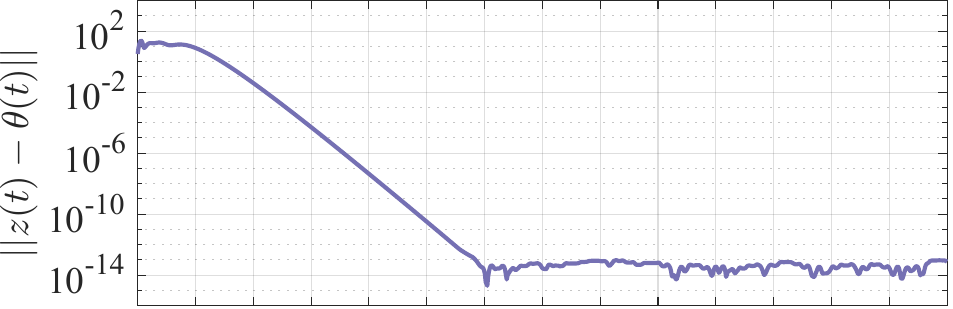}}
\centering \subfigure{\includegraphics[width=\columnwidth]{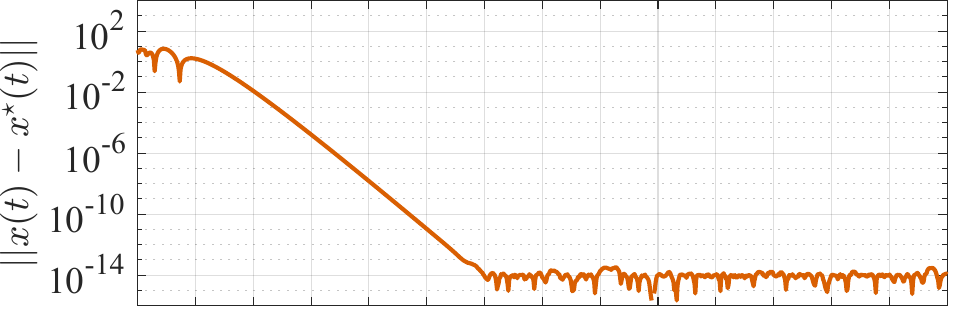}}
\centering \subfigure{\includegraphics[width=\columnwidth]{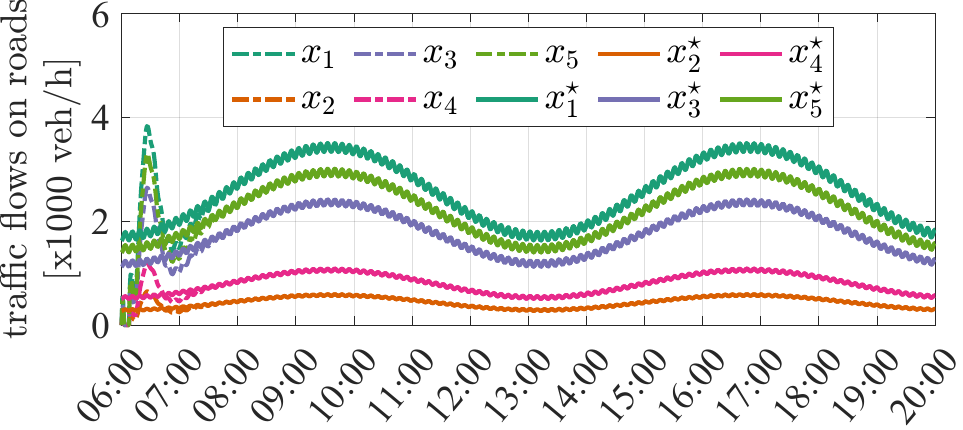}} 
\caption{%
Illustration of the performance of the optimization method 
synthesized using Algorithm~\ref{alg:grad_feedback_design} to solve the 
dynamic traffic assignment problem~\eqref{eq:traffic_assignment}. 
See Section~\ref{sec:dynamic_traffic_assignment} for a discussion. The 
bottom figure shows that, from a zero initial condition, the algorithm is 
capable of computing a Wardrop equilibrium in about $1.5$ hours, and then of 
tracking this equilibrium. 
Notice that, a faster rate of convergence could be 
obtained by shifting the eigenvalues of $S-LQ.$
}
\label{fig:wardrop_simulation}
\end{figure}

We next illustrate the applicability of the framework in solving the dynamic 
traffic assignment problem in roadway transportation~\cite{MP:15}; 
intuitively, the objective is to decide how traffic flows are split among the 
available paths of a network to minimize the drivers' travel time to 
destination.
We model a roadway transportation network using a static flow 
model~\cite{MP:15}, described by a directed graph 
$\mc G =(\mc V, \mc E),$ with edges $i \in \mc E \subseteq \mc V \times \mc V$ 
(modeling traffic roads) and nodes $\mc V$ (modeling traffic junctions). For 
$i \in \mc E,$ we denote by $i^+ \in \mc V$ and $i^- \in \mc V$ its origin and 
destination nodes, respectively.
We assume that an exogenous, time-varying, inflow of traffic $\theta(t)$ 
enters the network at a certain origin node, denoted by  $o \in \mc V,$ and 
exits at a certain destination node, denoted by $d \in \mc V;$
for simplicity, we assume that there is only one origin-destination 
pair, but this is without loss of generality\footnote{When the network has 
multiple origin-destination pairs, an optimization problem of the 
form~\eqref{eq:traffic_assignment} will be associated with each 
origin-destination pair. Because the optimization problems are independent from one 
another~\cite{MP:15}, each of them can be solved independently using the methods 
illustrated here.}. 
We describe the network state using a vector 
$x \in \realnneg^{\vert \mc E \vert}$ (where $\vert \mc E\vert$ denotes the number of edges) whose entries $x_i$ describe the amount of inflow 
$\theta$ routed through road $i.$ 
To each link $i,$ we associate a function $\ell_i(x_i)$ describing the  
latency (or travel time) of the road $i$. 
For our simulations, we consider the network topology in 
Fig.~\ref{fig:traffic_network_brussels}, and choose the latency functions as 
follows:
\begin{align*}
\ell_1(x_1) &= x_1, & \ell_2(x_2) &= 10x_2, & \ell_3(x_3) &= x_3, \\
\ell_4(x_4) &= 5x_4, & \ell_5(x_5) &= x_5. 
\end{align*}
According to Wardrop's first principle~\cite[pp.~31]{MP:15}, transportation 
networks operate at a condition where travelers select their path to minimize 
their travel time to their destination. Mathematically, Wardrop's equilibrium is 
the optimizer of the following optimization problem:
\begin{align}\label{eq:traffic_assignment}
\min_{x \in \real^{\vert \mc E\vert}} ~~~ & ~~~
\sum_{i \in \mc E} \int_0^{x_i} \ell_i(s) ds \nonumber\\
\text{subject to:} & 
\sum_{j \in \mc E:j^- =v} x_j - \sum_{j \in \mc E:j^+ = v} x_j = \delta_v(\theta(t)), \ \forall v \in \mc V, \nonumber\\
& x_i \geq 0, \ \forall i \in \mc E,
\end{align}
where $\delta_v(\theta)$ for $v \in \mc V$ is defined as $\delta_o(\theta) = \theta$ for the origin node, $\delta_d(\theta) = -\theta$ for the destination node, and $\delta_v(\theta) = 0$ for all other nodes.
The loss function in~\eqref{eq:traffic_assignment} is used to model 
travelers who will switch to a different path if it has a shorter travel time to 
destination, while the first constraint in~\eqref{eq:traffic_assignment} 
describes the network topology, namely, that traffic flows are conserved at 
each node. 
Notice that~\eqref{eq:traffic_assignment} is a time-varying optimization 
problem, where the temporal variability originates from the dependence of the 
constraint on $\theta(t)$, which describes the inflow of vehicles at the origin and outflow at the destination, measured in vehicles per hour.
For our simulations, we assume that the network inflow is sinusoidal:
\begin{align}\label{eq:lambda_inflow_model}
\theta(t) = \theta_0 - \theta_1 \cos(\omega_1 t + \phi_1) - 
\theta_2 \cos(\omega_2 t + \phi_2),
\end{align}
where $\theta_0, \theta_1, \theta_2, \omega_1, \omega_2, \phi_1, \phi_2 \in \realpos,$ satisfy 
$\theta_0 > \theta_1, $ $\theta_0 > \theta_2,$ and $\omega_2>\omega_1.$
The model~\eqref{eq:lambda_inflow_model} states that the network inflow is the 
sum of a constant term, $\theta_0,$ a slowly-varying sinusoid with angular 
frequency $\omega_1$ and a quickly-varying sinusoid with angular frequency 
$\omega_2.$ The low-frequency sinusoid is used here to describe slowly-varying 
(e.g., hourly) traffic demands, while the high-frequency sinusoid is used to 
model sudden (e.g., at the minute level) variations in traffic demand. For our 
simulations, we let $\theta_0=3$ veh/h, $\theta_1 = 1$ veh/h, 
$\theta_2 = 0.1$ veh/h, $\omega_1 = 0.1$ rad/hour, $\omega_2 = \sqrt{50}$ rad/hour, and $\phi_1=\phi_2=0,$ see Fig.~\ref{fig:wardrop_simulation} (top).

We applied Algorithm~\ref{alg:grad_feedback_design} to derive an optimization 
algorithm to solve the traffic assignment problem~\eqref{eq:traffic_assignment}.
For the synthesis, we utilized the internal model $s(z)=Sz,$ where
\[
    S = \text{diag}\biggl(\bmat{0 & 1 \\ -\omega_1^2 & 0}, \bmat{0 & 1 \\ -\omega_2^2 & 0}, 0\biggr).
\]
Notice that knowledge of $\theta_0,\theta_1, \theta_2, \phi_1, \phi_2$ is 
not required to synthesize the optimization algorithm---only the frequencies 
$\omega_1, \omega_2$ are required to be known. 
To seek a solution to the constrained problem~\eqref{eq:traffic_assignment},
consider the Lagrangian function:
\begin{multline*}
L(x,\lambda,\theta(t)) := 
\sum_{i \in \mc E} \int_0^{x_i} \ell_i(s) ds  \\
+ \sum_{v \in \mc V} \lambda_v \biggl(\delta_v(\theta(t)) - 
\sum_{j \in \mc E: j^- =v} x_j + \sum_{j \in \mc E: j^+ = v} x_j \biggr),
\end{multline*}
where $\lambda:=(\lambda_1, \dots ,\lambda_{\vert \mc V\vert})$ is the vector 
of Lagrange multipliers. We applied Algorithm~\ref{alg:grad_feedback_design} 
to the optimization problem~\eqref{eq:optimization_objective_f} with 
$f(\tilde x,\theta) = L(x, \lambda, \theta)$ (see 
Section~\ref{sec:extensions}); the inequality constraints 
in~\eqref{eq:traffic_assignment} have been accounted for by projecting $x(t)$ 
onto the feasible set. Here, matrix $L$ has been chosen so that the 
eigenvalues of $S-LQ$ are uniformly distributed in the open real interval 
$(-1,-2)$. 
Notice that, since the latencies are 
strictly increasing, the Lagrangian is strongly convex-strongly 
concave~\cite{MP:15}, and thus the problem admits a unique critical point that 
is a saddle point. It follows that our algorithm is guaranteed to converge to 
a minimizer of~\eqref{eq:traffic_assignment}. 
Simulations for this problem are presented in Fig.~\ref{fig:wardrop_simulation}.
The simulation shows that $z(t) \rightarrow \theta(t)$ as $t\rightarrow\infty$
(see Fig.~\ref{fig:wardrop_simulation}-second figure) and 
$y(t) \rightarrow 0$ (up to machine precision)
which implies that the algorithm successfully computes 
a critical point (see Fig.~\ref{fig:wardrop_simulation}-third figure). 
Finally, the bottom figure of Fig.~\ref{fig:wardrop_simulation} illustrates 
the rate of convergence of the algorithm, which, as expected, is governed by 
the placement of the eigenvalues of the observer.

\section{Conclusions}
\label{sec:conclusions}
We showed that the problem of designing algorithms for 
time-varying 
optimization problems can be reformulated as a nonlinear multivariate output 
regulation problem. This connection allowed us to prove the 
internal model principle of time-varying optimization, which states that 
(when the algorithm has access only to function evaluations of the gradient) 
asymptotic tracking can be achieved only if the algorithm  incorporates a 
reduplicated model of the temporal variability of the 
problem. On the other hand, when the time-varying parameters are 
measurable, asymptotic tracking can be achieved by seeking a mapping that 
zeros the gradient.
Importantly, our results show that asymptotic tracking can be achieved under 
more relaxed assumptions 
than what is normally imposed in the literature.
Moreover, our algorithm structure is novel in 
the literature, and it relies on the use of an observer for the temporal 
variability of the problem. This work opens the opportunity for several  
directions of future work, including the relaxation of the convexity and 
smoothness assumptions, an investigation of discrete-time problems, and 
applications to feedback optimization.

\appendix


%
%

We now summarize relevant facts in center manifold theory 
from~\cite{JC:81}; see also~\cite{SW:23}. Consider the nonlinear system:
\begin{align}\label{eq:nonlinear_sys_preliminaries}
\dot{x}=f(x),
\end{align}
where $f$ is a $C^k$ vector field defined on an open subset $U$ of $\real^n$, 
and let $x_{\circ} \in U$ be an equilibrium point for $f$, i.e., $f(x_{\circ})=0$. Without loss of generality, suppose $x^{\circ}=0$. Let
$F=\left[\frac{\partial f}{\partial x}\right]_{x=0},$
denote the Jacobian matrix of $f$ at $x=0$.
Suppose the matrix $F$ has $n^{\circ}$ eigenvalues with zero real part, 
$n^{-}$ eigenvalues with negative real part, and $n^{+}$ eigenvalues with 
positive real part. 
Let $E^{-}, E^{\circ},$ and $E^{+}$ be the (generalized) real eigenspaces of 
$F$ associated with eigenvalues of~$F$ lying on the open left half plane, the 
imaginary axis, and the open right half plane, respectively.
Note that $E^{\circ}, E^{-}, E^{+}$ have dimension  
$n^{\circ}, n^{-}, n^{+}$, respectively and that each of these spaces is 
invariant under the flow of $\dot x = Fx.$
If the linear mapping $F$ is viewed as a representation of the differential 
(at $x=0$) of the nonlinear mapping $f$, 
its domain is the tangent space $T_0 U$ to $U$ at $x=0$, and the three 
subspaces in question can be viewed as subspaces of $T_0 U$ satisfying
$T_0 U=E^{\circ} \oplus E^{-} \oplus E^{+}.$
We refer to~\cite[Sec.~A.II]{MI:01} for a precise definition of $C^k$ 
manifolds; loosely speaking, a set $S \subset U$ is a $C^k$ manifold it can be 
locally represented as the graph of a $C^k$ function.


\begin{definition}[\textbf{\textit{Locally invariant manifold}}]
A $C^k$ manifold $S$ of $U$ is 
locally invariant for~\eqref{eq:nonlinear_sys_preliminaries} if, for 
each $x_{\circ} \in S$, there exists $t_1<0<t_2$ such that the integral curve 
$x(t)$ of~\eqref{eq:nonlinear_sys_preliminaries}, such that $x(0)=x_{\circ}$, satisfies 
$x(t) \in S$ for all $t \in (t_1, t_2)$.
\QEDB\end{definition}

Intuitively, by letting $x=(y,\theta)$ and expressing~\eqref{eq:nonlinear_sys_preliminaries} 
as:
\begin{align}\label{eq:nonlinear_sys_preliminaries_b}
\dot{y}=f_y(\theta,y), && \dot{\theta}=f_\theta(\theta,y),
\end{align}
a curve $y = \pi(\theta)$ is an invariant manifold 
for~\eqref{eq:nonlinear_sys_preliminaries_b} if the solution 
of~\eqref{eq:nonlinear_sys_preliminaries_b} with 
$\theta(0)=\theta_\circ$ and $y(0) = \pi(\theta_\circ)$ lies on the curve 
$y=\pi(\theta)$ for $t$ in a neighborhood of $0.$
The notion of invariant manifold is useful as, under certain assumptions, it 
allows us to reduce the analysis of~\eqref{eq:nonlinear_sys_preliminaries} to 
the study of a reduced system in the variable $\theta$ only. The remainder of 
this section is devoted to formalizing this fact.

\begin{definition}[\textbf{\textit{Center manifold}}]
Let $x=0$ be an equilibrium of~\eqref{eq:nonlinear_sys_preliminaries}. A 
manifold $S$, passing through $x=0$, is said to be a center manifold 
for~\eqref{eq:nonlinear_sys_preliminaries} at $x=0$ if it is locally invariant 
and the tangent space to $S$ at 0 is exactly $E^{\circ}$.
\QEDB\end{definition}

Intuitively, the invariant manifold $y=\pi(\theta)$ is a center 
manifold for~\eqref{eq:nonlinear_sys_preliminaries_b} when all orbits of $y$ decay to zero and those of $\theta$ neither 
decay nor grow exponentially. 


In what follows, we will assume that all eigenvalues of $F$ have nonpositive 
real part, i.e., $n^+ = 0$.
When this holds, it is always possible to choose coordinates in 
$U$ such that~\eqref{eq:nonlinear_sys_preliminaries} is
\begin{subequations} \label{eq:preliminaries_first_order_system}
\begin{align}
& \dot{y}=A y+g(y, \theta), \label{eq:preliminaries_first_order_system_a}\\
& \dot{\theta}=B \theta+h(y, \theta), \label{eq:preliminaries_first_order_system_b}
\end{align}
\end{subequations}
where $A$ is an $n^{-} \times n^{-}$ matrix having all eigenvalues 
with negative real part, $B$ is an $n^{\circ} \times n^{\circ}$ 
matrix having all eigenvalues with zero real part, and the functions $g$ and 
$h$ are $C^k$ functions vanishing at $(y, \theta)=(0,0),$ together with all 
their first-order derivatives.
Because of their equivalence, any conclusion 
drawn for~\eqref{eq:preliminaries_first_order_system} will apply also 
to~\eqref{eq:nonlinear_sys_preliminaries}. 
The following result ensures the existence of a center manifold.

\begin{theorem}[\textbf{\textit{Center manifold existence theorem}}]
\label{thm:existence_center_manifold}
Assume that $n^+ = 0$. There exists a neighborhood 
$V \subset \real^{n^0}$ of $0$ and a class $C^{k-1}$ 
mapping $\pi: V \rightarrow \real^{n^{-}}$ such that the set
\[
    S = \{(y, \theta) \in \real^{n^{-}}  \times V: y=\pi(\theta)\},
\]
is a center manifold for~\eqref{eq:preliminaries_first_order_system}. 
\QEDB\end{theorem}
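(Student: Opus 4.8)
The plan is to realize the center manifold as the graph $y=\pi(\theta)$ of a function obtained as the fixed point of an integral operator encoding invariance, and then to upgrade the regularity of this fixed point to $C^{k-1}$. I work throughout with the decomposed system~\eqref{eq:preliminaries_first_order_system}, in which $A$ is Hurwitz and $B$ has spectrum on the imaginary axis. Invariance of a graph means that along any trajectory with $y(0)=\pi(\theta(0))$ one has $y(t)=\pi(\theta(t))$; differentiating and substituting~\eqref{eq:preliminaries_first_order_system} yields the quasilinear invariance equation
\begin{equation}
\frac{\partial \pi(\theta)}{\partial \theta}\bigl[B\theta+h(\pi(\theta),\theta)\bigr]=A\pi(\theta)+g(\pi(\theta),\theta).
\end{equation}
A center manifold is a solution of this equation with $\pi(0)=0$ and $\frac{\partial\pi}{\partial\theta}(0)=0$, so that its tangent space at the origin is exactly $E^\circ$.

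First I would \emph{localize}. Since $g,h$ and their first derivatives vanish at the origin, for any $\delta>0$ I can multiply by a smooth cutoff and replace $g,h$ by globally defined $\tilde g,\tilde h$ that agree with $g,h$ on a ball of radius $\delta$, vanish outside a larger ball, and have Lipschitz constants tending to $0$ as $\delta\to 0$. Any manifold constructed for the modified system coincides, on a small enough neighborhood $V$ of the origin, with a locally invariant manifold of the original one, so it suffices to treat the modified (bounded, small-slope) system. For a candidate Lipschitz map $\pi$ with $\pi(0)=0$, let $\theta_\pi(\cdot,\theta_0)$ solve the reduced equation $\dot\theta=B\theta+\tilde h(\pi(\theta),\theta)$ with $\theta_\pi(0,\theta_0)=\theta_0$. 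Applying variation of constants to the $y$-equation and imposing boundedness as backward time tends to $-\infty$ (the selection condition for the manifold, since $A$ is Hurwitz), the boundary term drops out and $\pi$ must satisfy
\begin{equation}
(\mathcal T\pi)(\theta_0)=\int_{-\infty}^{0}e^{-As}\,\tilde g\bigl(\pi(\theta_\pi(s,\theta_0)),\theta_\pi(s,\theta_0)\bigr)\,ds.
\end{equation}

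I would then run a contraction argument on the complete metric space of bounded, globally Lipschitz maps $\pi\colon\real^{n^\circ}\to\real^{n^-}$ with $\pi(0)=0$ and a prescribed small Lipschitz constant, under the sup metric. The key ingredients are the dichotomy estimates $\norm{e^{-As}}\le K e^{\alpha s}$ for $s\le 0$ with $\alpha>0$, while for any small $\beta>0$ the reduced flow obeys $\norm{\theta_\pi(s,\theta_0)}\le K_\beta e^{-\beta s}\norm{\theta_0}$ for $s\le 0$. Choosing $\beta<\alpha$ makes the integrand decay like $e^{(\alpha-\beta)s}$ and hence integrable on $(-\infty,0]$, while choosing $\delta$ small enough makes the Lipschitz constants of $\tilde g,\tilde h$ small, so that $\mathcal T$ maps the space into itself and is a contraction. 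The Banach fixed-point theorem then yields a unique Lipschitz $\pi$, whose graph is invariant because the fixed point reproduces exactly the bounded backward trajectory.

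The hardest part will be the regularity bootstrap: showing the Lipschitz fixed point is in fact $C^{k-1}$, and that $\frac{\partial\pi}{\partial\theta}(0)=0$. The tangency $\pi(0)=0$ is immediate from $\mathcal T$, and differentiating the invariance equation at the origin, using $Dg(0)=0$ and $Dh(0)=0$, forces $\frac{\partial\pi}{\partial\theta}(0)=0$. For the smoothness I would differentiate the integral equation and show that the formal derivative of $\pi$ satisfies its own contraction of the same exponential-dichotomy type (a fiber-contraction argument), giving $\pi\in C^1$; iterating up to order $k$, with one derivative surrendered to the continuity requirement on the data, produces $C^{k-1}$ regularity. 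Keeping each linearized operator a contraction while the $B$-flow grows only subexponentially is the delicate step, whereas existence and the Lipschitz bound are comparatively routine. Transporting the conclusion back through the cutoff then yields the stated $C^{k-1}$ center manifold on a neighborhood $V$, with $\pi$ the asserted map and $S$ its graph.
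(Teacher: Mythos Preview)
The paper does not prove this theorem: it is quoted from the literature (Carr~\cite{JC:81}) as background in the appendix, with no argument supplied beyond the statement and a few remarks. So there is nothing to compare your proof against.

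That said, your sketch is a correct outline of the standard Lyapunov--Perron proof. The localization by cutoff, the integral operator $\mathcal T$ obtained from variation of constants with the backward-boundedness selection, and the contraction on a space of bounded small-Lipschitz maps are exactly the right ingredients; your exponent bookkeeping (choosing the center growth rate $\beta$ strictly below the stable gap $\alpha$) is the correct mechanism that makes the integrals and Lipschitz estimates close. You are also right that the regularity step is where the real work lies: the fiber-contraction argument you describe is the standard route, and you correctly note that one cannot simply iterate the Banach fixed-point theorem because the derivative map is only a contraction in a weaker sense. One minor remark: the usual statements (including Carr's) actually yield a $C^k$ manifold from $C^k$ data for finite $k$, so the $C^{k-1}$ in the paper's formulation is slightly conservative; your ``one derivative surrendered'' comment is not strictly forced by the argument, though it does match the paper's stated regularity.
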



Some important observations are in order.
By definition, a center manifold 
for~\eqref{eq:preliminaries_first_order_system} passes through $(0,0)$ and 
is tangent to the subset of points whose $y$ coordinate is zero. 
Namely,
\begin{align}\label{eq:mapping_center_manifold}
    \pi(0)=0 \qquad\text{and}\qquad \frac{\partial \pi}{\partial\theta}(0)=0.
\end{align}
Moreover, this manifold is locally invariant 
for~\eqref{eq:preliminaries_first_order_system}: this imposes on the 
mapping $\pi$ the constraint:
%
%
\begin{align}\label{eq:differential_equation_center_manifold}
\frac{\partial \pi}{\partial \theta}(B \theta+h(\pi(\theta), \theta))=A \pi(\theta)+g(\pi(\theta), \theta),    
\end{align}
as deduced by differentiating with respect to time any solution 
$(y(t), \theta(t))$ of~\eqref{eq:preliminaries_first_order_system} on the manifold $y(t)=\pi(\theta(t))$. In other words, any 
center manifold for~\eqref{eq:preliminaries_first_order_system} can 
equivalently be described as the graph of a mapping $y=\pi(\theta)$ satisfying 
the 
partial differential equation~\eqref{eq:differential_equation_center_manifold}, 
with the constraints~\eqref{eq:mapping_center_manifold}.

\begin{remark}
Theorem~\ref{thm:existence_center_manifold} shows existence but 
not the uniqueness of a center manifold. Moreover, (i)
if $g$ and $h$ are $C^k, k \in \naturalpos,$  then
\eqref{eq:preliminaries_first_order_system} admits a $C^{k-1}$
center manifold; (ii) if $g$ and $h$ are 
$C^\infty$ functions, then \eqref{eq:preliminaries_first_order_system} has a $C^k$ 
center manifold for any finite $k$, but not necessarily a $C^\infty$ center 
manifold.
\QEDB \end{remark}

The next result shows that any $y$-trajectory
of~\eqref{eq:preliminaries_first_order_system}, starting sufficiently close to 
the origin converges, as time tends to infinity, to a trajectory that belongs 
to the center manifold.

\begin{theorem}
\label{thm:local_attractivity_manifold}
Assume that $n^+ = 0$ and 
suppose $y=\pi(\theta)$ is a center manifold 
for~\eqref{eq:preliminaries_first_order_system} at $(0,0)$. Let 
$(y(t), \theta(t))$ be a solution
of~\eqref{eq:preliminaries_first_order_system}. There 
exists a neighborhood $U^{\circ}$ of $(0,0)$ and real numbers $M>0$ and $K>0$ 
such that, if $(y(0), \theta(0)) \in U^{\circ}$, then for all $t \geq 0$,
\begin{align*}
\|y(t)-\pi(\theta(t))\| \leq M e^{-K t}\|y(0)-\pi(\theta(0))\|. \tag*{\QEDB}
\end{align*}
\end{theorem}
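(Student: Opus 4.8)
The plan is to track the deviation of the solution from the manifold and show it decays exponentially via a variation-of-constants plus Gr\"onwall argument. I would define $u(t) := y(t) - \pi(\theta(t))$, the displacement from the center manifold. Differentiating and using \eqref{eq:preliminaries_first_order_system_a}--\eqref{eq:preliminaries_first_order_system_b}, writing $y = u + \pi(\theta)$, and invoking the invariance identity \eqref{eq:differential_equation_center_manifold} to eliminate $A\pi(\theta)$, the terms involving $B\theta$ cancel and the remainder organizes into
\begin{align*}
\dot u = A u + N(u,\theta),
\end{align*}
where
\begin{align*}
N(u,\theta) = g(u+\pi(\theta),\theta) - g(\pi(\theta),\theta) - \frac{\partial \pi}{\partial \theta}\bigl[h(u+\pi(\theta),\theta) - h(\pi(\theta),\theta)\bigr].
\end{align*}
Two features of $N$ are essential. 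First, $N(0,\theta)=0$, so $u\equiv 0$ is invariant, which is merely the invariance of the manifold re-expressed. Second, $N$ is Lipschitz in $u$ with a \emph{small} constant near the origin: since $g$ and $h$ vanish at $(0,0)$ together with their first derivatives and $\tfrac{\partial \pi}{\partial \theta}(0)=0$ by \eqref{eq:mapping_center_manifold}, the fundamental theorem of calculus applied to the $g$- and $h$-differences yields $\|N(u,\theta)\| \le \gamma\,\|u\|$ on a ball of radius $\rho$ about the origin, with $\gamma=\gamma(\rho)\to 0$ as $\rho\to 0$.

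Next I would exploit that $A$ is Hurwitz by the construction of \eqref{eq:preliminaries_first_order_system_a}: there exist constants $\tilde M\ge 1$ and $\alpha>0$ with $\|e^{At}\|\le \tilde M e^{-\alpha t}$. Representing the solution through $u(t)=e^{At}u(0)+\int_0^t e^{A(t-s)}N(u(s),\theta(s))\,ds$, taking norms, and applying the estimate on $N$ gives $e^{\alpha t}\|u(t)\| \le \tilde M\|u(0)\| + \tilde M\gamma\int_0^t e^{\alpha s}\|u(s)\|\,ds$. Gr\"onwall's inequality then yields $\|u(t)\| \le \tilde M e^{-(\alpha-\tilde M\gamma)t}\|u(0)\|$. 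Choosing $\rho$ small enough that $\tilde M\gamma(\rho)<\alpha$ and setting $M:=\tilde M$ and $K:=\alpha-\tilde M\gamma$, this is exactly the asserted bound $\|y(t)-\pi(\theta(t))\| \le M e^{-Kt}\|y(0)-\pi(\theta(0))\|$.

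The main obstacle is ensuring the small-Lipschitz estimate on $N$ stays valid along the \emph{entire} forward trajectory. Because $B$ has eigenvalues only on the imaginary axis (recall \eqref{eq:condition_n+}), the component $\theta(t)$ need not decay, and a priori nothing prevents the solution from drifting out of the ball of radius $\rho$ on which $\gamma$ controls $N$. I would resolve this exactly as in the existence result (Theorem~\ref{thm:existence_center_manifold}): replace $g$ and $h$ by globally defined modifications, obtained via a smooth cutoff supported near the origin, that agree with the originals on a neighborhood of $0$ and possess a globally small Lipschitz constant. For the modified system every solution exists for all $t\ge 0$ and the bound $\|N(u,\theta)\|\le\gamma\|u\|$ holds globally, so the Gr\"onwall estimate applies for all $t$ without any containment hypothesis; since the two vector fields coincide near $0$, the conclusion transfers back to the original system for initial data in a sufficiently small neighborhood $U^\circ$, and the established decay of $u$ keeps $y(t)$ inside the region where the systems agree, closing the argument.
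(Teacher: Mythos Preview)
The paper does not prove this theorem: it appears in the appendix as part of a summary of standard center-manifold facts quoted from Carr~\cite{JC:81} (note the \QEDB\ placed at the end of the \emph{statement} itself, with no proof environment following). There is therefore no in-paper argument to compare against; what you have written is essentially the classical proof from Carr's monograph, and the derivation of $\dot u = Au + N(u,\theta)$, the small-Lipschitz estimate on $N$, and the variation-of-constants/Gr\"onwall step are all correct and standard.

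The one soft spot is your closing containment argument. You write that ``the established decay of $u$ keeps $y(t)$ inside the region where the systems agree,'' but decay of $u$ only controls the distance of $y(t)$ from $\pi(\theta(t))$; it says nothing about where $\theta(t)$ itself lies. Since $B$ has spectrum on the imaginary axis, $\theta(t)$ can drift out of the ball on which the cutoff is inactive, and once it does the modified and original vector fields differ even when $u$ is tiny, so the transfer back to the original system is not automatic. In Carr's treatment this is resolved by stating the exponential estimate for the \emph{modified} system globally and reading the local statement for the original system on the maximal time interval for which the trajectory remains in the neighborhood; in the present paper the issue is moot in the applications because Assumption~\ref{as:stability_exosystem} supplies Lyapunov stability of $\theta=0$, which keeps $\theta(t)$ confined for small initial data. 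Either way, your last sentence should be sharpened to acknowledge that containment of $\theta$ needs a separate ingredient beyond the decay of $u$.
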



From the above discussion, any trajectory 
of~\eqref{eq:preliminaries_first_order_system} starting at a point 
$y^{\circ}=\pi\left(\theta^{\circ}\right)$ of a center manifold 
satisfies $y(t) = \pi(\zeta(t))$ and $\theta(t) = \zeta(t)$,
where $\zeta(t)$ is any solution of
\begin{align}\label{eq:zeta_dynamics}
\dot{\zeta}=B \zeta+h(\pi(\zeta), \zeta), \qquad \zeta(0) = \theta_\circ.
\end{align}
This decomposition allows us to predict the asymptotic behavior  
of~\eqref{eq:preliminaries_first_order_system} by studying the asymptotic 
behavior of the reduced-order system \eqref{eq:zeta_dynamics}. This 
is formalized in the following result.

\begin{theorem}
Suppose $\zeta=0$ is a stable (respectively, asymptotically stable, unstable) 
equilibrium of~\eqref{eq:zeta_dynamics}. Then $(y, \theta)=(0,0)$ is a stable 
(respectively, asymptotically stable, unstable) equilibrium 
of~\eqref{eq:preliminaries_first_order_system}.
\QEDB\end{theorem}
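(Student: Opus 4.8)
The plan is to establish this classical reduction principle by introducing the deviation coordinate $w = y - \pi(\theta)$ from the center manifold and showing that~\eqref{eq:preliminaries_first_order_system} decouples into a cascade whose driving subsystem is precisely the reduced system~\eqref{eq:zeta_dynamics}. First I would differentiate $w$ along trajectories of~\eqref{eq:preliminaries_first_order_system} and substitute the invariance identity~\eqref{eq:differential_equation_center_manifold} to cancel the $A\pi(\theta)$ term and the $\frac{\partial\pi}{\partial\theta}(B\theta+h(\pi(\theta),\theta))$ term. The outcome is a system of the form
\begin{align*}
\dot w &= A w + G(w,\theta), \\
\dot\theta &= B\theta + h(\pi(\theta),\theta) + \tilde h(w,\theta),
\end{align*}
where $G(w,\theta) = [g(\pi(\theta)+w,\theta)-g(\pi(\theta),\theta)] - \frac{\partial\pi}{\partial\theta}[h(\pi(\theta)+w,\theta)-h(\pi(\theta),\theta)]$ and $\tilde h(w,\theta)=h(\pi(\theta)+w,\theta)-h(\pi(\theta),\theta)$ both vanish identically when $w=0$. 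Crucially, the restriction of this system to the invariant set $\{w=0\}$ (the center manifold) reproduces exactly~\eqref{eq:zeta_dynamics}, and since $g$ and $h$ vanish together with their first derivatives at the origin, the coupling $G$ is of higher order, so that the Hurwitz matrix $A$ governs the $w$-dynamics near the origin.

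For the unstable case the argument is immediate from invariance: because $\{w=0\}$ is invariant and carries exactly the reduced dynamics~\eqref{eq:zeta_dynamics}, instability of $\zeta=0$ furnishes, for every $\delta>0$, an initial condition $\theta(0)$ with $\|\theta(0)\|<\delta$ whose reduced trajectory leaves a fixed ball. Initializing the full system at $(y(0),\theta(0))=(\pi(\theta(0)),\theta(0))$ yields a trajectory that remains on the manifold and therefore escapes any fixed neighborhood, so $(y,\theta)=(0,0)$ is unstable.

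For the stable and asymptotically stable cases I would invoke Theorem~\ref{thm:local_attractivity_manifold} to conclude that $\|w(t)\|=\|y(t)-\pi(\theta(t))\|$ decays exponentially, so that the term $\tilde h(w(t),\theta(t))$ acts on the $\theta$-equation as an exponentially vanishing perturbation of the reduced system~\eqref{eq:zeta_dynamics}. I would then transfer the stability property by a perturbation argument. In the asymptotically stable case, a converse Lyapunov function $V(\theta)$ for~\eqref{eq:zeta_dynamics} satisfies, along the full flow, $\dot V \le -\phi(\|\theta\|) + c\|\tilde h(w,\theta)\|$; since the perturbation is exponentially small, a total-stability (vanishing-perturbation) estimate forces $\theta(t)\to 0$, and together with $w(t)\to 0$ this gives asymptotic stability of $(0,0)$. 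For mere stability, where no strict Lyapunov decrease is available, I would argue directly from the $\varepsilon$--$\delta$ definition, using continuity of $\tilde h$, the smallness of $w$, and stability of the reduced flow to keep $\theta(t)$ inside a prescribed neighborhood and bootstrap a bound on $(w,\theta)$.

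The main obstacle is the stability and asymptotic-stability transfer, not the instability case. The delicate point is that the $\theta$-dynamics of the full system are a \emph{perturbation} of the reduced dynamics, so one cannot simply quote stability of~\eqref{eq:zeta_dynamics}; one must simultaneously control the feedback of $\theta$ into $w$ through $G$ and of $w$ into $\theta$ through $\tilde h$. This is exactly where the higher-order vanishing of $g$ and $h$, together with $\frac{\partial\pi}{\partial\theta}(0)=0$ from~\eqref{eq:mapping_center_manifold}, is essential: it guarantees that $A$ dominates $G$ locally, so the exponential estimate of Theorem~\ref{thm:local_attractivity_manifold} holds uniformly and $\tilde h$ is genuinely exponentially small. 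The bare-stability subcase is the most technical, since the total-stability machinery requires a strict Lyapunov decrease that is unavailable, forcing a careful direct continuation argument.
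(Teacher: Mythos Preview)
The paper does not supply its own proof of this theorem; the appendix explicitly ``summarize[s] relevant facts in center manifold theory from~\cite{JC:81},'' and this statement is quoted as a known result with only a \QEDB\ marker. So there is no in-paper argument to compare against.

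That said, your sketch is exactly the classical route taken in Carr's monograph: pass to the deviation $w=y-\pi(\theta)$, use the invariance identity~\eqref{eq:differential_equation_center_manifold} to obtain the cascade form, read off instability from invariance of $\{w=0\}$, and transfer (asymptotic) stability via the exponential estimate of Theorem~\ref{thm:local_attractivity_manifold} together with a vanishing-perturbation argument. One point worth tightening: Theorem~\ref{thm:local_attractivity_manifold} only guarantees the exponential bound while the trajectory remains in the neighborhood $U^\circ$, so in the stable case you cannot simply ``invoke'' it and then argue about $\theta$; you must run a continuation/bootstrapping argument that keeps $(w,\theta)$ inside $U^\circ$ on the same time interval on which you are using the decay estimate. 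You flag this circularity at the end, but the proof as written applies the estimate before securing confinement. Carr handles this by first fixing a small ball on which the higher-order terms are uniformly dominated by $A$, proving the $w$-decay there directly (not by citing the attractivity theorem as a black box), and only then closing the $\theta$-loop.
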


\bibliographystyle{IEEEtran}
\bibliography{BIB/alias,BIB/full_GB,BIB/GB,BIB/references}

\end{document}